\newtheorem{Theorem}{Theorem}[section]
\newtheorem{Lemma}[Theorem]{Lemma}
\newtheorem{Corollary}[Theorem]{Corollary}
\newtheorem{Proposition}[Theorem]{Proposition}
\newtheorem{Example}[Theorem]{Example}
\newtheorem{Definition}[Theorem]{Definition}
\begin{document}
\title{The Fractional fixing number of graphs}
\author{Hira Benish$^1$, Iqra Irshad$^1$, Min Feng$^2$, Imran Javaid$^{1, *}$}
\keywords{Fixing set, Fixing neighborhood, Fixing function, Fractional fixing number.\\
\indent 2010 {\it Mathematics Subject Classification.} 05C25\\
\indent $^*$ Corresponding author: imran.javaid@bzu.edu.pk}
\address{$^1$Centre for advanced studies in Pure and Applied Mathematics,
Bahauddin Zakariya University, Multan, Pakistan.  Email:
hira\_benish@yahoo.com, iqrairshad9344@gmail.com,
imran.javaid@bzu.edu.pk.}
\address {$^2$ Sch. Math. Sci. Lab. Math. Comp. Sys. Beijing Normal
University, Beijing, 100875, China. Email: fgmn\_1998@163.com}

\maketitle
\begin{abstract}
An automorphism group of a graph $G$ is the set of all permutations
of the vertex set of $G$ that preserve adjacency and non-adjacency
of vertices in $G$. A fixing set of a graph $G$ is a subset of
vertices of $G$ such that only the trivial automorphism fixes every
vertex in $S$. Minimum cardinality of a fixing set of $G$ is called
the fixing number of $G$. In this article, we define a fractional
version of the fixing number of a graph. We formulate the problem of
finding the fixing number of a graph as an integer programming
problem. It is shown that a relaxation of this problem leads to a
linear programming problem and hence to a fractional version of the
fixing number of a graph. We also characterize the graphs $G$ with
the fractional fixing number $\frac{|V(G)|}{2}$ and the fractional
fixing number of some families of graphs is also obtained.
\end{abstract}
\section{Motivation and Background}
Motivation behind the development of the fractional idea has
multiple aspects. One of the interesting aspect is that the
fractional version multiplies the range of applications in operation
research, scheduling or in various kind of assignment problems.
Theorems in their fractional version are mostly easier to prove.
Mostly for fractional and classical coefficients of graphs bounds
are same or it may form conjecture in fractional version. Most of
the times, the conjecture becomes refined theorem in their
fractional version. Fractional version of parameters have drawn the
attention of researchers to a wealth of new problems and
conjectures. Fractional graph theory has modified the concept of
integer-valued graph theory to the non-integral values.\\

Interesting aspects of fractional graph theory motivated Hedetniemi
{\it et al.} to introduce the concept of the fractional domination
number of a graph by linear relaxation of the integer programming
problem of domination number of graphs \cite{HHW}. A variety of work
has been done on the fractional domination number of graphs, see
\cite{DCF, RW, RW2, Walsh}. Currie {\it et al.} defined the
fractional metric dimension of a graph as the optimal solution of
the linear relaxation of the integer programming problem of the
metric dimension of graphs \cite{CO}. The fractional metric
dimension of graphs and graph products has also been studied
\cite{AM, FLW, FW, FW2, KS, EY}. The metric dimension of a graph is
an upper bound for the fixing number of graph \cite{erw}. Their
relationship has been studied in \cite{cac,erw}. In this paper, we
introduce fractional version of the fixing number of graph by
introducing the idea of fixed graph. In the following paragraph, we
introduce some relevant terminology needed for exposition of
this idea.\\

All graphs considered in this paper are simple, non-trivial and
undirected. Let $G=(V(G),E(G))$, when there is no ambiguity, we
simply write $G=(V,E)$. The number of vertices and edges of $G$ are
called the {\it order} and the {\it size} of $G$ respectively. For
$u,v\in V(G)$, $u\sim v$ means $u$ and $v$ are adjacent and
$u\not\sim v$ means $u$ and $v$ are not adjacent. The {\it open
neighborhood} of a vertex $u$ is $N_G(u)=\{v \in V(G):$ $v\sim u$ in
$G\}$ and the {\it closed neighborhood} of $u$ is $N_G [u] = N_G(u)
\cup\{u\}$. The number $|N_G(v)|$ is called the degree of $v$ in
$G$. The {\it distance} $d(u,v)$ between two vertices $u,v \in V(G)$
is the length of a shortest path between them. Two distinct vertices
$u$ and $v$ in a graph $G$ are said to be {\it twins} if
$d(u,w)=d(v,w)$ for all $w\in V(G)\backslash \{u,v\}$. A set
$U\subseteq V(G)$ is called a {\it twin-set} of $G$ if $u, v$ are
twins in $G$ for every pair of distinct vertices
$u,v\in U$.\\

A {\it permutation} of a set is a bijection from the set to itself.
An {\it automorphism} of a graph is a permutation of the vertex set
that preserves adjacency and non-adjacency of the vertices. An
equivalent definition is: $\pi:V(G)\rightarrow V(G)$ is an
automorphism of a graph $G$ if for all $u,v\in V(G)$, $\pi(u)\sim
\pi(v)$ if and only if $u\sim v$. The set of all automorphisms of
$G$ forms a group, called the {\it automorphism group} of the graph
$G$. We use $\Gamma(G)$, or $\Gamma$ if $G$ is clear from the
context, to denote the full automorphism group of a graph $G$. We
consider the full automorphism group $\Gamma$ acting on the vertex
set $V$ of $G$. For $u\in V$, the {\em orbit} $O(u)$ and  {\em
stabilizer} $\Gamma_u$ of $u$ is defined as
$O(u)=\{\pi(u):\pi\in\Gamma\}$ and
$\Gamma_u=\{\pi\in\Gamma:\pi(u)=u\}$. For $T\subseteq V$,
$\Gamma_T=\cap_{u\in T}\Gamma_u$. For $x\in V$, the subgroup
$\Gamma_x$ has a natural action on $V$ and the orbit of $u$ under
this action is denoted by $O_x(u)$ {\it i.e.},
$O_x(u)=\{\pi(u):\pi\in\Gamma_x\}$. Define
$$A(G)=\{u:|O(u)|\ge2\} \qquad\textup{and}\qquad C(G)=\{u:|O(u)|=1\}.$$
Then $V(G)$ is the disjoint union of $A(G)$ and $C(G)$. Define
$$V_a(G)=\{(u,v)\in A(G)\times A(G):O(u)=O(v),u\ne v\}.$$ If $G$ is a
rigid graph (a graph with $\Gamma(G)={id}$), then
$V_a(G)=\emptyset$.\\

A set $\mathcal{D}\subseteq V(G)$ is called a {\it determining set}
of $G$ if whenever $\alpha,\beta\in \Gamma(G)$ such that
$\alpha(v)=\beta(v)$ for all $v\in \mathcal{D}$, then
$\alpha(u)=\beta(u)$ for every $u\in V(G)$. The {\it determining
number} of a graph $G$ is the order of a smallest determining set,
denoted by $Det(G)$. Determining sets of graphs were introduced by
Boutin in \cite{bou}. She gave several ways of finding and verifying
determining sets. The natural lower bounds on the determining number
of some graphs were also given. Determining sets are frequently used
to identify the automorphism group of a graph. For further work on
determining sets and its relation with other parameters, see
\cite{bou,cac}. Erwin and Harary independently introduced an
equivalent concept: the fixing number of a graph $G$ \cite{erw}. A
set $S\subset V$ is a {\it fixing set} of $G$ if $\Gamma_S$ is
trivial, i.e., the only automorphism that fixes all vertices of $S$
is the trivial automorphism. The cardinality of a smallest fixing
set is called the {\it fixing number} of $G$, denoted by $fix(G)$.
The fixing number of graphs is also used to study the symmetry of
graphs and the relationships of groups and graphs \cite{gib}. The
equivalence of determining and fixing set of graphs was also
established in \cite{gib}.

A fixing set $S$ of $G$ is minimal if no proper subset of $S$ is a
fixing set of $G$. In families of graphs like path, cycle, complete
graph and complete bipartite graphs minimum fixing sets and minimal
fixing sets have same cardinality. This is not the case always. For
example, let $C_{4n}$, $n\geq2$, be the cycle graph with
$V(C_{4n})=\{v_1,v_2,\cdots,v_{4n}\}$. Attach two pendent vertices
with $v_1$ and $v_{2n+1}$. Let us denote the resulting graph by
$\mathcal{C}$. It is easy to check that $\{v_2\}$ and
$\{v_{n+1},v_{2n+1}\}$ are minimal fixing sets of $\mathcal{C}$. In
this example, there exists a minimal fixing set whose cardinality is
different from that of a minimum fixing set. We call the maximum
cardinality of a minimal fixing set of a graph $G$, the upper fixing
number of $G$, denoted by $fix^+(G)$. Note that $fix(G)\leq fix^+(G)$.\\

This paper is organized as follows: in section 2, we define the
fixing neighborhood of a graph and the fixed graph. In this section
fractional version of the fixing number of a graph is defined. We
give an integer programming problem for the problem of finding the
fixing number of a graph. We also show that by relaxing conditions
of this problem, a linear programming problem is formulated which is
a fractional version of the fixing number of a graph. In section 3,
we characterize the graphs $G$ with the fractional fixing number
$\frac{|V(G)|}{2}$. Section 4 of this paper is devoted to the study
of the fractional fixing number of some families of graphs. In
section 5, we study the fractional fixing number of corona product
of graphs. In the last section, we study the fractional fixing
number of composition product of graphs.
\section{Fixing Neighborhood and Fixed Graphs}
In this section, we define the concept of fixing neighborhood and
fixed neighborhood. We also define the fixed graph using fixing
neighborhood of graph.

A vertex $v$ is {\it fixed} by an automorphism $\pi\in \Gamma(G)$ if
$\pi\in \Gamma_v$. A vertex $x$ is said to be fixed vertex in $G$ if
$\pi(x)=x, \,\,\forall \, \pi\in \Gamma(G)$. A vertex $x\in V(G)$ is
said to {\it fix a pair} $(u,v)\in V(G)\times V(G)$, if $O_x(u)\ne
O_x(v)$ in $G$. For $(u,v)\in V(G)\times V(G)$ and the set
$F(u,v)=\{x\in V(G): O_x(u)\ne O_x(v)\}$ is called the {\it fixing
neighborhood} of $(u,v)$. For each $x\in V(G)$, the set
$F(x)=\{(u,v)\in V(G)\times V(G) :O_x(u)\ne O_x(v)\}$ is called the
{\it fixed neighborhood} of $x$. For any two distinct vertices $u$
and $v$ in $G$ with $O(u)\neq O(v)$,  $F(u,v)=V(G)$.
 Note that $fix(G)=0$ if and only if $G$
is rigid graph \cite{gib}. If $G$ is rigid graph then for all
distinct vertices $u,v \in V(G)$, $F(u,v)=V(G)$ but converse is not
true. For example, consider an even path $P_n$ on $n$ vertices, then
$F(u,v)=V(G)$ for any two distinct
vertices of $P_n$ which is not rigid.\\
For two distinct vertices $u$ and $v$ in a graph $G$, define
$\iota_{u,v}: V(G)\rightarrow V(G),$
$$\iota_{u,v}(x)=\left\{\begin{array}{ll}
                                                    v, & \textup{if } x=u, \\
                                                    u, &  \textup{if } x=v, \\
                                                    x, & \textup{otherwise.}
                                                  \end{array}\right.
$$
Then $\iota_{u,v}$ is an automorphism of $G$ if and only if $u$ and
$v$ are twins. Hence, we have the following result;
\begin{Lemma}\label{twins}
Let $u$ and $v$ be distinct vertices in a graph $G$. Then
$\{u,v\}\subseteq F(u,v)$. Moreover, we have $F(u,v)=\{u,v\}$ if and
only if $u$ and $v$ are twins.
\end{Lemma}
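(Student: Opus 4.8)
The plan is to handle the containment and the characterization separately, leaning on the transposition $\iota_{u,v}$ introduced just before the statement together with two elementary facts: that automorphisms are injective (being permutations of $V(G)$) and that they preserve distances.

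First I would establish $\{u,v\}\subseteq F(u,v)$. Taking $x=u$, every $\pi\in\Gamma_u$ fixes $u$, so $O_u(u)=\{u\}$. On the other hand $u\notin O_u(v)$: if $\pi(v)=u$ for some $\pi\in\Gamma_u$, then $\pi(v)=u=\pi(u)$ forces $v=u$ by injectivity, a contradiction. Hence $O_u(u)\ne O_u(v)$ and $u\in F(u,v)$. The identical computation with the roles of $u$ and $v$ exchanged gives $O_v(v)=\{v\}$ and $v\notin O_v(u)$, so $v\in F(u,v)$.

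For the ``if'' direction of the characterization I would suppose that $u$ and $v$ are twins, so that $\iota_{u,v}\in\Gamma(G)$, and then show that no $x\notin\{u,v\}$ lies in $F(u,v)$. Fixing such an $x$, since $\iota_{u,v}(x)=x$ we have $\iota_{u,v}\in\Gamma_x$, and because $\iota_{u,v}(u)=v$ this yields $v\in O_x(u)$. As $O_x(u)$ and $O_x(v)$ are orbits of the action of $\Gamma_x$ on $V(G)$, they either coincide or are disjoint; since they share the vertex $v$, they must coincide, so $O_x(u)=O_x(v)$ and $x\notin F(u,v)$. Together with the containment above this gives $F(u,v)=\{u,v\}$. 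For the ``only if'' direction I would argue by contrapositive using distances: if $u$ and $v$ are not twins, there is a vertex $w\in V(G)\setminus\{u,v\}$ with $d(u,w)\ne d(v,w)$. Every $\pi\in\Gamma_w$ fixes $w$ and preserves distances, so $d(\pi(u),w)=d(u,w)$ and $d(\pi(v),w)=d(v,w)$; thus $O_w(u)$ lies entirely in the sphere of radius $d(u,w)$ about $w$ while $O_w(v)$ lies in the sphere of radius $d(v,w)$. These spheres are disjoint, so $O_w(u)\ne O_w(v)$, whence $w\in F(u,v)$ with $w\notin\{u,v\}$, giving $F(u,v)\supsetneq\{u,v\}$.

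I expect the main subtlety to lie in the ``if'' direction: one must upgrade the single membership $v\in O_x(u)$ to the full equality $O_x(u)=O_x(v)$, and the cleanest way to do this is to invoke that the orbits of $\Gamma_x$ partition $V(G)$, rather than attempting to match the two orbits element by element.
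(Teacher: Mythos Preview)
Your proof is correct and follows the route the paper intends. The paper does not give a detailed argument for this lemma; it simply records that $\iota_{u,v}\in\Gamma(G)$ if and only if $u$ and $v$ are twins and then writes ``Hence, we have the following result.'' Your proposal spells out precisely the details behind that ``hence'': the containment $\{u,v\}\subseteq F(u,v)$ via injectivity, the ``if'' direction via the transposition $\iota_{u,v}$ sitting in every $\Gamma_x$ for $x\notin\{u,v\}$, and the ``only if'' direction via distance preservation (which is exactly the content of the paper's subsequent Lemma~\ref{lemma}).
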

For two distinct vertices $u$ and $v$ in $G$, $R(u,v)=\{x\in V(G):
d(x,u)\ne d(x,v)\}$ where $d(x,u)$ is the distance between $x$ and
$u$.
\begin{Lemma}\label{lemma}
  Let $u$ and $v$ be two distinct vertices in $G$. Then $R(u,v)\subseteq F(u,v)$.
\end{Lemma}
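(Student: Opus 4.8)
The plan is to prove the containment pointwise. I would take an arbitrary $x \in R(u,v)$, so that $d(x,u) \neq d(x,v)$ holds by the definition of $R(u,v)$, and aim to verify that $x$ fixes the pair $(u,v)$, i.e. that $O_x(u) \neq O_x(v)$; this is precisely what places $x$ in $F(u,v)$.

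The key step exploits the fact that every automorphism preserves distances, together with the defining property of the stabilizer $\Gamma_x$. For any $\pi \in \Gamma_x$ one has $\pi(x) = x$, and since $\pi$ preserves adjacency and non-adjacency it preserves distance, so that $d(x, \pi(u)) = d(\pi(x), \pi(u)) = d(x, u)$. Hence every vertex of the orbit $O_x(u) = \{\pi(u) : \pi \in \Gamma_x\}$ lies at distance exactly $d(x,u)$ from $x$; the identical argument shows that every vertex of $O_x(v)$ lies at distance $d(x,v)$ from $x$.

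It then remains only to read off the conclusion. Because $d(x,u) \neq d(x,v)$, the orbits $O_x(u)$ and $O_x(v)$ are contained in two distinct distance-spheres centered at $x$ and are therefore disjoint; moreover each is nonempty, since $u \in O_x(u)$ and $v \in O_x(v)$. Two nonempty disjoint sets cannot coincide, whence $O_x(u) \neq O_x(v)$, so $x \in F(u,v)$. As $x$ was an arbitrary element of $R(u,v)$, this yields $R(u,v) \subseteq F(u,v)$.

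I do not anticipate a genuine obstacle in this argument: it is a direct consequence of distance-invariance under automorphisms, and the only point requiring a moment's care is the observation that differing distances force the two orbits to be disjoint, and hence unequal. Conceptually, this is the lemma that links the fixing neighborhood $F(u,v)$ to the metric-dimension resolving neighborhood $R(u,v)$, and it is what underlies the known fact, cited earlier, that the metric dimension is an upper bound for the fixing number.
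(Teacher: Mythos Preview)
Your proof is correct and follows essentially the same approach as the paper's own proof: take $x\in R(u,v)$, use that automorphisms in $\Gamma_x$ fix $x$ and preserve distances to conclude $O_x(u)\ne O_x(v)$. The paper compresses this into a single sentence, whereas you spell out explicitly that each orbit lies in a fixed distance-sphere about $x$; this extra detail is harmless and arguably clearer.
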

\proof If $x\in R(u,v)$, then $d(x,u)\ne d(x,v)$, which implies that
$O_x(u)\ne O_x(v)$ since   automorphisms preserve the distances in
the graph $G$. Hence, the required result follows. \qed

Note that in order to destroy automorphisms, only those vertices
$u,v\in V(G)$ are of interest for which $O(u)=O(v)$ and $|O(u)|\geq
2$. So it is sufficient to consider $V_a(G)$ instead of $V(G)\times
V(G)$. If $S$ is a fixing set, then it is clear that $S\cap
F(u,v)\neq \emptyset$ for any pair $(u,v)\in V_a(G)$.
Moreover, for each pair in $V_a(G)$ can be fixed by elements of
$A(G)$ only. If $F(u,v)=A(G)$, for all $u,v \in A(G)$, then
$fix(G)=1$ but converse is not true. To see this, consider the
cartesian product of $P_4$ and $P_5$, denoted by $P_4\Box P_5$ and
$V(P_4\Box P_5)=\{u_{ij}|1\leq i\leq 4, 1\leq j\leq 5\}$. Note that
$A(P_4\Box P_5)=V(P_4\Box P_5)$ and $fix(P_4\Box P_5)=1$. But
$F(u_{11},u_{15})\neq A(P_4\Box P_5)$ because $u_{13}$ does not fix
the pair $(u_{11},u_{15})$.

 The {\it fixed graph}, $I(G)$, of a
graph $G$ is a bipartite graph with bipartition $(V(G),V_a(G))$ and
a vertex $x\in V(G)$ is adjacent to a pair $(u,v)\in V_a(G)$ if
$x\in F(u,v)$. For a set $D\subseteq A(G)$, $N_{I(G)}(D)=\{(u,v)\in
V_a(G):$ $x\in F(u,v)$ for some $x\in D\}$. In the fixed graph,
$I(G)$, the minimum cardinality of a subset $D$ of $V(G)$ such that
$N_{I(G)}(D)=V_a(G)$ is the fixing number of $G$. For a graph $G$ of
order $n$, if $C(G)=V(G)\setminus A(G)$ and $F(u,v)=A(G)$ for all
$u,v\in A(G)$, then $I(G)=K_{|A(G)|, |V_a(G)|}\cup
\overline{K_{|C(G)|}}$. For a path $P_n$ on even $n$ vertices,
$I(G)=K_{n,\frac{n}{2}}$ and for a path $P_n$ on odd $n$ vertices,
$I(G)=K_{n-1,{\frac{n-1}{2}}}\cup K_1$. Least positive integer $k$
such that every $k$-set of vertices of a graph $G$ is a fixing set
of $G$ is called the {\it fixed number} of $G$ denoted by $fxd(G)$.
A graph $G$ is said to be a $k$-fixed graph if $fix(G) = fxd(G) =
k$. Javaid {\it et al.} studied fixed number of graphs in
\cite{ijav}. Lower and upper bounds on the cardinality of edge set
of $I(G)$ for a $k$-fixed graph $G$ were given in \cite{ijavh} and
\cite{ijav}.
\begin{Proposition}
If $G$ is a $k$-fixed graph of order $n\geq 2$, fixing number $k$
and $|A(G)|=l$, then
$$\frac{l}{2}(l-k+1)\leq |E(I(G))|\leq n({n \choose 2}-k+1).$$
\end{Proposition}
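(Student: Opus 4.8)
The plan is to evaluate $|E(I(G))|$ in two dual ways — summing over the pair-side and over the vertex-side of the bipartite graph $I(G)$ — and to feed each count with one of the two defining equalities in the hypothesis $fix(G)=fxd(G)=k$. Recall that $x\in V(G)$ is joined in $I(G)$ to $(u,v)\in V_a(G)$ exactly when $x\in F(u,v)$, so
\[
|E(I(G))|=\sum_{(u,v)\in V_a(G)}|F(u,v)|=\sum_{x\in V(G)}\deg_{I(G)}(x).
\]
Two facts are used throughout: by Lemma \ref{twins} every pair satisfies $\{u,v\}\subseteq F(u,v)$, so $F(u,v)\neq\emptyset$, and no vertex of $C(G)$ fixes any pair of $V_a(G)$, so in fact $F(u,v)\subseteq A(G)$.

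For the lower bound I would use $fxd(G)=k$. A set $D$ fails to fix $(u,v)$ iff $D\cap F(u,v)=\emptyset$, i.e. $D\subseteq V(G)\setminus F(u,v)$. Since $fxd(G)=k$ means every $k$-subset of $V(G)$ is a fixing set, no $k$-set can lie inside any $V(G)\setminus F(u,v)$; hence $|V(G)\setminus F(u,v)|\le k-1$, that is $|F(u,v)|\ge n-k+1\ge l-k+1$ for every pair. Combining this with the elementary estimate that the number of same-orbit pairs is at least $l/2$ (each of the $l$ vertices of $A(G)$ lies in an orbit of size $\ge 2$, and $\sum_i\binom{n_i}{2}\ge\tfrac12\sum_i n_i=\tfrac l2$) gives $|E(I(G))|\ge\frac l2\,(l-k+1)$.

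For the upper bound I would switch to the vertex-side and use $fix(G)=k$. Fix $x$ and let $t_x$ be the number of pairs it does not fix. For each such pair pick one vertex of its (nonempty) fixing neighborhood; together with $x$ these form a fixing set of size at most $1+t_x$, since every pair is then fixed either by $x$ or by its chosen representative. As $fix(G)=k$, every fixing set has at least $k$ vertices, so $1+t_x\ge k$, i.e. each $x$ misses at least $k-1$ pairs. Since there are at most $\binom n2$ pairs in total, $\deg_{I(G)}(x)\le\binom n2-(k-1)$, and summing over the $n$ vertices yields $|E(I(G))|\le n\big(\binom n2-k+1\big)$.

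The step I expect to be the main obstacle is the upper-bound deficiency estimate: converting ``$fix(G)=k$'' into the uniform per-vertex statement that every $x$ leaves at least $k-1$ pairs unfixed. The engine is the greedy fixing set that repairs, one vertex at a time, each pair missed by $x$, whose validity rests precisely on $F(u,v)\neq\emptyset$ from Lemma \ref{twins}. I would also take care with the bookkeeping of $V_a(G)$: the stated constant $\binom n2$ (rather than $n(n-1)$) presumes each same-orbit pair is counted once, so the bound $|V_a(G)|\le\binom n2$ used above and the estimate $|V_a(G)|\ge l/2$ in the lower bound must be read under the same unordered convention.
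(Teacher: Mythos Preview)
The paper does not prove this proposition. Immediately before stating it, the authors write that the bounds ``were given in \cite{ijavh} and \cite{ijav}'' and then quote the result without proof, so there is no in-paper argument to compare against. Your double-counting approach --- bounding $|F(u,v)|$ from below via $fxd(G)=k$ and bounding the per-vertex degree in $I(G)$ from above via $fix(G)=k$ --- is the natural one and presumably close to what the cited references do.

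Two remarks on the argument itself. Your upper-bound step is sound: the greedy set $T=\{x\}\cup\{z_{(u,v)}\}$ meets every $F(u,v)$, and ``$T$ meets every $F(u,v)\Rightarrow T$ is fixing'' is the easy direction (a nontrivial $\pi\in\Gamma_T$ would produce a pair $(u,\pi(u))\in V_a(G)$ with $T\cap F(u,\pi(u))=\emptyset$). Your lower-bound step, however, invokes the converse ``$D$ fixing $\Rightarrow D$ meets every $F(u,v)$'', which the paper also asserts as ``clear'' but which is the subtler direction; it can fail for abstract permutation groups (for $A_5$ acting naturally on five points, $\{1,2,3\}$ has trivial pointwise stabiliser yet misses $F(4,5)=\{4,5\}$), so a self-contained write-up should either justify it in the graph/$k$-fixed setting or derive $|F(u,v)|\ge n-k+1$ through the easy direction alone. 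Your closing caveat on the ordered versus unordered reading of $V_a(G)$ is exactly right: the paper's definition uses ordered pairs, under which both sides of the inequality double, so the stated constants match only under the unordered convention you adopt.
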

Rest of this section is devoted to the formulation of fractional
version of the fixing number of a graph and its integer programming
formulation.\\
Suppose $V(G)=\{v_1,v_2,...,v_n\}$ and
$V_a(G)=\{s_1,s_2,...,s_{r}\}$, $r\geq 1$. Let $B=(b_{ij})$ be the
${r}\times n$
matrix with\\

$$b_{ij}=\left\{
  \begin{array}{ll}
             1,   \,\,\,\     & \mbox{if}\,\,\,\ s_iv_j\in E(I(G)),\\
             0,    \,\,\,\     & \mbox{otherwise},\\
            \end{array}
             \right.
 $$
for $1\leq i\leq r$ and $1\leq j \leq n$.\\
The integer programming formulation of the fixing number is given
by:
\begin{center}
Minimize $f(x_1,x_2,...,x_n)=x_1+x_2+...+x_n,$\\
subject to the constraints\\
$Bx\geq [1]_{r}\qquad\textup{ and }\qquad x_i\in \{0,1\}$
\end{center}
where $x=[x_1,x_2,...,x_n]^T$, $[1]_k$ is the $k\times 1$ matrix all
of whose entries are 1, and $[0]_n$ is the $n\times1$ matrix all of
whose entries are 0.\\
If we relax the condition, $x_i\in \{0,1\}$ for every $i$ and
require that $x_i\geq 0$ for all $i$, then we obtain the following
linear programming problem:
\begin{center}
Minimize $f(x_1,x_2,...,x_n)=x_1+x_2+...+x_n$\\
subject to the constraints\\
$Bx\geq [1]_{r}\qquad\textup{ and }\qquad x\geq[0]_n.$
\end{center}
In terms of the fixed graph $I(G)$ of $G$, solving this linear
programming problem amounts to assigning non-negative weights to the
vertices in $V(G)$ so that for each pair in $V_a(G)$, the sum of
weights in its neighborhood is at least 1 and such that the sum of
weights of the vertices of $G$ is as small as possible. The smallest
value for $f$ is called the {\it fractional fixing number} of $G$.
\begin{Definition} Let $G$ be a connected graph of order $n$. A
function $g:V(G)\rightarrow [0,1]$ is a {\it fixing function} $FF$
of $G$ if $g(F(u,v))\geq1$ for any pair $(u,v)\in V_a(G)$, where
$g(F(u,v))=\sum\limits_{x\in F(u,v)}g(x)$ and $|g|=\sum\limits_{v\in
V}g(v)$. The {\it fractional fixing number}, denoted by $fix_f(G)$,
is the minimum value of $FF$.
\end{Definition}

\begin{Definition}
Let $G$ be a connected graph of order $n$. A function
$g:V(G)\rightarrow [0,1]$ is a {\it resolving function} $RF$ of $G$
if $g(R(u,v))\geq1$ for two distinct vertices $u,v\in V(G)$, where
$g(R(u,v))=\sum\limits_{x\in R(u,v)}g(x)$ and $|g|=\sum\limits_{v\in
V}g(v)$. The {\it fractional metric dimension}, denoted by
$dim_f(G)$, is the minimum value of $RF$.
\end{Definition}
In the next theorem, we
show that $dim_f(G)$ is an upper bound of $fix_f(G)$.
\begin{Theorem}
  For any connected graph $G$, we have $fix_f(G)\le dim_f(G)$.
\end{Theorem}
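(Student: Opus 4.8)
The plan is to show that every resolving function of $G$ is automatically a fixing function of $G$; the desired inequality then follows immediately by comparing the two minimization problems, since a smaller (more permissive) constraint set yields a smaller or equal optimum. The entire argument rests on the set-inclusion supplied by Lemma \ref{lemma} together with the non-negativity of the weights.

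First I would take an optimal resolving function $g$, so that $g:V(G)\rightarrow[0,1]$ satisfies $g(R(u,v))\ge 1$ for every pair of distinct vertices $u,v\in V(G)$ and $|g|=dim_f(G)$. I claim that this same $g$ qualifies as a fixing function. The codomain $[0,1]$ is exactly the one required in the definition of a fixing function, so only the defining constraint needs to be checked. Fix an arbitrary pair $(u,v)\in V_a(G)$. By the definition of $V_a(G)$, the entries $u$ and $v$ are distinct vertices of $G$, so the resolving constraint applies to them and gives $g(R(u,v))\ge 1$.

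The key step is to invoke Lemma \ref{lemma}, which yields the inclusion $R(u,v)\subseteq F(u,v)$. Since $g$ takes only non-negative values, enlarging the index set of summation from $R(u,v)$ to $F(u,v)$ cannot decrease the total, so
$$g(F(u,v))=\sum_{x\in F(u,v)}g(x)\ge\sum_{x\in R(u,v)}g(x)=g(R(u,v))\ge 1.$$
As $(u,v)\in V_a(G)$ was arbitrary, the inequality $g(F(u,v))\ge 1$ holds for every pair in $V_a(G)$, and hence $g$ is a fixing function of $G$.

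Finally, since $g$ is a fixing function with $|g|=dim_f(G)$ and $fix_f(G)$ is by definition the minimum of $|g|$ taken over all fixing functions, we conclude $fix_f(G)\le|g|=dim_f(G)$. I do not anticipate any genuine obstacle: the proof reduces entirely to the containment $R(u,v)\subseteq F(u,v)$ and the non-negativity of $g$, which together show that the constraints defining resolving functions are at least as strong as those defining fixing functions. The only point requiring a line of care is confirming that each pair in $V_a(G)$ indeed consists of distinct vertices so that the resolving constraint is applicable, which is immediate from the definition of $V_a(G)$.
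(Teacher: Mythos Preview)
Your proposal is correct and follows essentially the same approach as the paper: both use Lemma~\ref{lemma} to conclude that every resolving function is a fixing function, whence the inequality follows from comparing minima. The only cosmetic difference is that the paper disposes of the rigid case $V_a(G)=\emptyset$ separately (noting $fix_f(G)=0$), whereas your argument handles it implicitly since the fixing constraint becomes vacuous.
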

\proof If $G$ is a rigid graph, the $fix_f(G)=0\le dim_f(G)$. Now
suppose that $G$ is a non-rigid graph. By Lemma~\ref{lemma}, each
resolving function of $G$ is a fixing function of $G$. Therefore,
our desired inequality holds. \qed

\section{Characterization of graphs with $fix_f(G)=\frac{|V(G)|}{2}$}
In this section, we characterize the graphs having
$fix_f(G)=\frac{|V(G)|}{2}$. For graphs with $fix(G)=1$, it follows
that $fix_f(G)=1$ because the characteristic function of a minimal
fixing set is an $FF$ of $G$, it follows that $1\leq fix_f(G)\leq
fix(G)\leq fix^+(G)\leq n-1$. Note that fixing function plays an
important role while finding fractional fixing number of a graph. To
define fixing function that meets all conditions, we need to know
cardinalities of fixing neighborhoods of $(u,v)\in V_a(G)$. For a
graph $G$ of order $n$, we define
\begin{center}
$f(G)=\min\{|F(u,v)|: (u,v)\in V_a(G)\}.$
\end{center}
Now, we express the fractional fixing number of $G$ in terms of
$f(G)$ in the following proposition:
\begin{Proposition}\label{l1}
Let $G$ be a connected graph of order $n$. Then
$fix_f(G)\leq\frac{n}{f(G)}$.
\end{Proposition}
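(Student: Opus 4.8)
The plan is to exhibit an explicit fixing function whose weight equals exactly $\frac{n}{f(G)}$, which immediately yields the desired upper bound since $fix_f(G)$ is defined as the minimum weight over all fixing functions. The obvious candidate is the uniform (constant) weighting, so I would first define $g:V(G)\rightarrow[0,1]$ by setting $g(v)=\frac{1}{f(G)}$ for every $v\in V(G)$. Before anything else I must check that this is a legitimate fixing function, namely that its values lie in $[0,1]$; this requires $f(G)\geq 1$, which follows from Lemma~\ref{twins}, since for any pair $(u,v)\in V_a(G)$ we have $\{u,v\}\subseteq F(u,v)$ and hence $|F(u,v)|\geq 2$, giving $f(G)\geq 2$.

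Next I would verify the defining inequality of a fixing function. For an arbitrary pair $(u,v)\in V_a(G)$ I compute
$$g(F(u,v))=\sum_{x\in F(u,v)}g(x)=|F(u,v)|\cdot\frac{1}{f(G)}\geq\frac{f(G)}{f(G)}=1,$$
where the inequality uses precisely that $f(G)=\min\{|F(u,v)|:(u,v)\in V_a(G)\}$ is a lower bound on $|F(u,v)|$. Thus $g$ satisfies $g(F(u,v))\geq 1$ for every pair in $V_a(G)$, so $g$ is indeed an $FF$ of $G$.

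Finally I would compute the weight of $g$, obtaining
$$|g|=\sum_{v\in V(G)}g(v)=n\cdot\frac{1}{f(G)}=\frac{n}{f(G)},$$
and conclude $fix_f(G)\leq|g|=\frac{n}{f(G)}$ by minimality. There is no serious obstacle here: the argument is entirely structural, and the only point that needs care is confirming that the uniform weight $\frac{1}{f(G)}$ is at most $1$ so that $g$ maps into $[0,1]$, which is exactly where the twin bound $f(G)\geq 2$ is invoked. Everything else is a direct substitution of the definition of $f(G)$.
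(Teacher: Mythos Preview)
Your proof is correct and follows essentially the same approach as the paper: define the constant function $g\equiv\frac{1}{f(G)}$, verify it is a fixing function via $g(F(u,v))=\frac{|F(u,v)|}{f(G)}\geq 1$, and conclude $fix_f(G)\leq|g|=\frac{n}{f(G)}$. You are in fact slightly more careful than the paper, since you explicitly justify (via Lemma~\ref{twins}) that $f(G)\geq 2$ so that $g$ genuinely maps into $[0,1]$, a point the paper leaves implicit.
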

\begin{proof}
Let $g:V(G)\rightarrow[0, 1]$ defined by $g(v)=\frac{1}{f(G)}$. For
any two distinct vertices $u$ and $v$, we have
$g(F(u,v))=\frac{|F(u,v)|}{f(G)}\geq1$. Clearly $g$ is a fixing
function of $G$. Hence, $fix_f(G)\leq |g|=\frac{n}{f(G)}$.
\end{proof}
By above proposition and Lemma \ref{twins}, we have the following
result:
\begin{Corollary}\label{cro1n}
For a connected graph $G$ of order $n$, we have $fix_f(G)\leq
\frac{n}{2}$.
\end{Corollary}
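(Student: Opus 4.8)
The plan is to chain the two cited results together, the key observation being that Lemma~\ref{twins} forces the quantity $f(G)$ to be at least $2$. First I would dispose of the degenerate case: if $G$ is rigid, then $V_a(G)=\emptyset$ and $fix_f(G)=0$, so the bound $fix_f(G)\le \frac{n}{2}$ holds trivially (recall that $n\ge 2$ since all graphs here are non-trivial). For the remainder of the argument I may therefore assume that $G$ is non-rigid, so that $V_a(G)\ne\emptyset$ and $f(G)=\min\{|F(u,v)|:(u,v)\in V_a(G)\}$ is well defined.

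Next I would invoke Lemma~\ref{twins}, which guarantees $\{u,v\}\subseteq F(u,v)$ for any two distinct vertices, and in particular for every pair $(u,v)\in V_a(G)$. Since $u\ne v$, this yields $|F(u,v)|\ge 2$ for each such pair, and taking the minimum over all pairs in $V_a(G)$ gives $f(G)\ge 2$. Finally I would apply Proposition~\ref{l1}, which states $fix_f(G)\le \frac{n}{f(G)}$; combining this with $f(G)\ge 2$ produces $fix_f(G)\le \frac{n}{f(G)}\le \frac{n}{2}$, as required.

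The argument is entirely routine, and there is no substantial obstacle. The only point requiring a little care is the case distinction between rigid and non-rigid graphs, since $f(G)$ is defined as a minimum over $V_a(G)$ and is only meaningful when $V_a(G)$ is nonempty. Once that is handled, the lower bound $f(G)\ge 2$ extracted from Lemma~\ref{twins} feeds directly into Proposition~\ref{l1} to close the proof.
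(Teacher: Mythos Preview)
Your proof is correct and follows exactly the approach indicated in the paper, which simply cites Proposition~\ref{l1} and Lemma~\ref{twins} without further elaboration. Your explicit treatment of the rigid case (where $V_a(G)=\emptyset$ and $f(G)$ is undefined) is a welcome bit of added rigor that the paper glosses over.
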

In the rest of this section, we characterize all graphs $G$
attaining the upper bound in Corollary \ref{cro1n}.
\begin{Lemma}\label{cg}
For a non-rigid graph $G$, we have
$$
fix_f(G)\le \frac{|V(G)|-|C(G)|}{2},
$$
\end{Lemma}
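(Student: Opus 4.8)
The plan is to produce an explicit fixing function whose total weight equals the target bound, thereby proving the inequality directly from the definition of $fix_f(G)$. Since $V(G)$ is the disjoint union of $A(G)$ and $C(G)$, the quantity $\frac{|V(G)|-|C(G)|}{2}$ is just $\frac{|A(G)|}{2}$, so the goal is to beat the uniform construction of Proposition~\ref{l1} by concentrating all weight on the vertices lying in nontrivial orbits. The natural candidate is the function $g:V(G)\to[0,1]$ given by $g(x)=\frac12$ for $x\in A(G)$ and $g(x)=0$ for $x\in C(G)$. This immediately gives $|g|=\frac12|A(G)|=\frac{|V(G)|-|C(G)|}{2}$, so the entire task reduces to verifying that $g$ is a legitimate fixing function, i.e.\ that $g(F(u,v))\ge1$ for every pair $(u,v)\in V_a(G)$.

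First I would establish the structural fact that the fixing neighborhood of any relevant pair avoids $C(G)$ entirely, so that placing no weight on $C(G)$ costs nothing. Fix $(u,v)\in V_a(G)$ and take any $x\in C(G)$. Because $x$ is a fixed vertex, its stabilizer is the whole group, $\Gamma_x=\Gamma(G)$, and hence $O_x(w)=O(w)$ for every $w\in V(G)$. In particular $O_x(u)=O(u)=O(v)=O_x(v)$, where the middle equality holds by the definition of $V_a(G)$. Thus $x\notin F(u,v)$, which shows $F(u,v)\subseteq A(G)$ for every pair in $V_a(G)$.

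With this in hand the verification is routine: since $g$ vanishes off $A(G)$ and is constantly $\frac12$ on $A(G)\supseteq F(u,v)$, we obtain $g(F(u,v))=\frac12|F(u,v)|$. By Lemma~\ref{twins} we have $\{u,v\}\subseteq F(u,v)$, so $|F(u,v)|\ge2$ and therefore $g(F(u,v))\ge1$ for all $(u,v)\in V_a(G)$. Hence $g$ is a fixing function and $fix_f(G)\le|g|=\frac{|V(G)|-|C(G)|}{2}$, as claimed. The argument presents no serious obstacle; the only step requiring a moment's care is the observation that $F(u,v)\cap C(G)=\emptyset$, which is exactly what sharpens Corollary~\ref{cro1n} by allowing us to discard the fixed vertices of $C(G)$ from the support of the weighting.
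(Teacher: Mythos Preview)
Your proof is correct and follows essentially the same approach as the paper: both construct the same function $g$ (value $\tfrac12$ on $A(G)$ and $0$ on $C(G)$) and invoke Lemma~\ref{twins} to verify $g(F(u,v))\ge 1$ for $(u,v)\in V_a(G)$. Your extra observation that $F(u,v)\subseteq A(G)$ is a nice clarification but not strictly needed, since the paper simply uses $g(u)+g(v)=1$ directly from $\{u,v\}\subseteq F(u,v)$.
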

\begin{proof}
Define a function $g:V(G)\rightarrow[0,1]$,
$$g(x)=\left\{\begin{array}{ll}
                                            0, & \textup{if }x\in C(G), \\
                                            \frac{1}{2}, &  \textup{otherwise.}
                                          \end{array}\right.
$$
Note that $|V(G)|-|C(G)|\ge 2$. Pick any two distinct vertices $u$
and $v$ in $G$. If $O(u)\ne O(v)$, then $F(u,v)=V(G)$, and so
$g(F(u,v))=|g|\ge 1$. If $O(u)=O(v)$, then $g(u)=g(v)=\frac{1}{2}$,
which implies that $g(F(u,v))\ge 1$ by Lemma~\ref{twins}. It follows
that $g$ is a fixing function. Hence, the desired result holds.
\end{proof}

Given a graph $H$ and a family of graphs $\mathcal{I}=\{I_v\}_{v\in
V(H)}$, indexed by $V(H)$, their {\em generalized lexicographic
product}, denoted by $H[\mathcal{I}]$, is defined as the graph with
the vertex set $ V(H[\mathcal{I}])=\{(v,w)|v\in V(H)\textup{ and }
w\in V(I_v)\} $ and the edge set $ E(H[\mathcal{I}])
=\{\{(v_1,w_1),(v_2,w_2)\}|\{v_1, v_2\}\in E(H), \textup{or }v_1=
v_2\textup{ and }\{w_1,w_2\}\in E(I_{v_1})\}.$

\begin{Theorem}\label{mthm}
Let $G$ be a non-trivial graph of order $n$. Then the following
conditions are pairwise equivalent.

  {\rm(i)} $fix_f(G)=\frac{n}{2}$.

  {\rm(ii)} Each vertex in $G$ has a twin.

  {\rm(iii)} There exist a graph $H$ and a family of graphs $\mathcal{I}=\{I_v\}_{v\in
V(H)}$, where $I_v$ is a non-trivial null graph  or a non-trivial
complete graph, such that $G$ is isomorphic to $H[\mathcal I]$.
\end{Theorem}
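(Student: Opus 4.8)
The plan is to prove the two equivalences (i)$\Leftrightarrow$(ii) and (ii)$\Leftrightarrow$(iii) separately, after first recording the structural fact that underpins both: being \emph{twins} is an equivalence relation on $V(G)$. The only nonobvious point is transitivity, so I would check it directly from the distance definition. If $u,v$ are twins and $v,w$ are twins (all distinct), then for any $z\neq u,w$ I would split into two cases: if $z\neq v$, chaining $d(u,z)=d(v,z)=d(w,z)$ gives the claim; if $z=v$, the two hypotheses force $d(u,v)=d(u,w)=d(v,w)$, so in particular $d(u,v)=d(w,v)$. Hence $V(G)$ partitions into twin classes $C_1,\dots,C_k$, and condition (ii) says exactly that every class satisfies $|C_i|\ge 2$. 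I would also note that for twins $x,y$ the map $\iota_{x,y}$ is an automorphism, so $O(x)=O(y)$ with $|O(x)|\ge 2$; thus $(x,y)\in V_a(G)$ and, by Lemma~\ref{twins}, $F(x,y)=\{x,y\}$.

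For (ii)$\Rightarrow$(i), I would bound every fixing function $g$ from below. Fix a class $C_i$ with $n_i=|C_i|\ge 2$. For each unordered pair $\{x,y\}\subseteq C_i$ the constraint $g(F(x,y))\ge 1$ reads $g(x)+g(y)\ge 1$. Summing these $\binom{n_i}{2}$ inequalities, in which each $x\in C_i$ occurs $n_i-1$ times, yields $(n_i-1)\sum_{x\in C_i}g(x)\ge \binom{n_i}{2}$, i.e. $\sum_{x\in C_i}g(x)\ge n_i/2$. Since the classes partition $V(G)$, summing over $i$ gives $|g|\ge \sum_i n_i/2=n/2$, so $fix_f(G)\ge n/2$; combined with Corollary~\ref{cro1n} this forces $fix_f(G)=n/2$.

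For (i)$\Rightarrow$(ii) I would argue the contrapositive by exhibiting a cheap fixing function. Suppose some vertex $z$ has no twin. The key observation is that $z$ lies in no twin pair's fixing neighborhood: if $z\in F(u,v)=\{u,v\}$ for a twin pair $(u,v)$, then $z\in\{u,v\}$ would give $z$ a twin. Hence every pair $(u,v)\in V_a(G)$ with $z\in F(u,v)$ has $F(u,v)\neq\{u,v\}$, so $|F(u,v)|\ge 3$ by Lemma~\ref{twins}. Define $g(z)=0$ and $g(x)=\tfrac12$ otherwise. Any constraint avoiding $z$ is unaffected and still gives $g(F(u,v))=|F(u,v)|/2\ge 1$; any constraint containing $z$ gives $g(F(u,v))=(|F(u,v)|-1)/2\ge 1$. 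Thus $g$ is a fixing function with $|g|=(n-1)/2<n/2$, contradicting (i).

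Finally, for (ii)$\Leftrightarrow$(iii) I would extract the generalized lexicographic structure from the twin classes. Using the three-vertex equality $d(u,v)=d(u,w)=d(v,w)$ inside a class, together with the fact that in a connected graph two twins are at distance $1$ or $2$ (a neighbor of one is a common neighbor, by the twin condition), each class $C_i$ is forced to be either a clique or an independent set; I would then set $I_{v_i}$ to be a complete or a null graph of order $|C_i|\ge 2$ accordingly. Between two classes I would show adjacency is all-or-nothing: if $u\in C_i$ is adjacent to $v\in C_j$, then every twin of $u$ is at distance $1$ from $v$ and every twin of $v$ at distance $1$ from $u$, so $C_i$ and $C_j$ are completely joined. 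Letting $H$ be the quotient graph on the classes (with $C_i\sim C_j$ in $H$ iff $C_i,C_j$ are completely joined) then yields $G\cong H[\mathcal I]$. The converse is routine: in $H[\mathcal I]$ any two vertices in a common non-trivial fiber are twins, so each vertex has a twin. I expect the main obstacle to be this last part — pinning down that the twin classes are uniformly cliques or independent sets and that inter-class adjacency is all-or-nothing, so that $G$ is \emph{exactly} a generalized lexicographic product — rather than the fractional estimates, which follow cleanly from Lemma~\ref{twins} and Corollary~\ref{cro1n}.
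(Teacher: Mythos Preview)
Your proof is correct and follows essentially the same approach as the paper: the same fixing function $g(z)=0$, $g(x)=\tfrac12$ for (i)$\Rightarrow$(ii), the same pairwise summing over a twin class for the lower bound, and the same quotient-by-twins construction for the lexicographic structure. The only difference is organizational---you prove (i)$\Leftrightarrow$(ii) and (ii)$\Leftrightarrow$(iii) directly (and supply more detail on transitivity and the clique/independent-set dichotomy), whereas the paper argues the cycle (i)$\Rightarrow$(ii)$\Rightarrow$(iii)$\Rightarrow$(i) and defers the lower bound to the (iii)$\Rightarrow$(i) step.
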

\begin{proof}
We show that (i) indicates (ii),
 (ii) indicates (iii), and (iii) indicates (i).
Suppose (i) holds. Then $C(G)=\emptyset$ by Lemma \ref{cg}. If there
exists a vertex $u$ in $G$ such that $u$ does not have a twin, then
the following function $g: V\rightarrow[0,1]$,
$$g(x)=\left\{\begin{array}{ll}
                                       0, & \textup{if }x=u, \\
                                       \frac{1}{2}, & \textup{if }x\ne u,
                                     \end{array}\right.
$$
is a fixing function of $G$ by Lemma \ref{twins}, which implies that
$fix_f(G)\le\frac{n-1}{2}$, a contradiction. So (ii) holds.

Suppose (ii) holds. For   $x,y\in V(G)$, define $u\equiv v$ if and
only if $x=y$ or $x, y$ are twins. It is clear that $\equiv$ is an
equivalence relation. Suppose
\begin{equation*}
O_{1}, \ldots, O_{m}
\end{equation*}
are the equivalence classes. Then the induced subgraph on each
$O_i$, denoted also by $I_{O_i}$, is a non-trivial null graph  or a
non-trivial complete graph. Let $H$ be the graph with the vertex set
$\{O_1,\ldots,O_m\}$, where two distinct vertices $O_i$ and $O_j$
are adjacent if  there exist $x\in O_i$ and $y\in O_j$ such that $x
$ and $y$ are adjacent in $G$. It is routine to verify that $G$ is
isomorphic to $H[\mathcal I]$, where $\mathcal
I=\{I_{O_i}:i=1,\ldots,m\}$. So (iii) holds.

Suppose (iii) holds. For $v\in V(H)$, write
$$
V(I_v)=\{w_v^1,\ldots,w_v^{s(v)}\}.
$$
Then $s(v)\ge 2$, and $(v,w_v^i)$ and $(v,w_v^j)$ are twins in
$H[\mathcal I]$, where $1\le i<j\le s(v)$. Let $h$ be a fixing
function of $H[\mathcal I]$ with $|h|=fix_f(H[\mathcal I])$. By
Lemma \ref{twins}, we get
\begin{equation*}
  h((v,w_v^i))+h((v,w_v^j))\ge 1\quad\textup{for }1\le i<j\le s(v),
\end{equation*}
which implies that
$$
\sum_{k=1}^{s(v)}h(v,w_v^k)\ge\frac{s(v)}{2},
$$
and so
$$
fix_f(G)=fix_f(H[\mathcal I])=|h|=\sum_{v\in
V(H)}\sum_{k=1}^{s(v)}h((v,w_v^k)) \ge \sum_{v\in
V(H)}\frac{s(v)}{2} =\frac{|V(H[\mathcal I])|}{2} =\frac{n}{2}.
$$
So (i) holds. We accomplish the proof.
\end{proof}
For following families of graphs, each vertex in these graph has a
twin. Using Theorem \ref{mthm}, we get the fractional fixing number
of these families of graphs:
\begin{Example}\label{c1}
$fix_f(G)=\frac{|V(G)|}{2}$ for each of the following graphs:
\begin{enumerate}
  \item $G=K_n,n\geq2$.
  \item $G=K_n-e$, where $n\ge 4$ and $e$ is an arbitrary edge of $K_n.$
  \item $G=K_{2t}-M,t\geq2$ and $M$ is a perfect matching in
$K_{2t}$.
  \item $G$ is complete $k$-partite graph $K_{n_1,n_2,...,n_k}$,
where $k\geq2$ and $n_i\geq 2$.
\end{enumerate}
\end{Example}
The \emph{join graph} $G+H$ is the graph obtained from $G$ and $H$
by joining each vertex of $G$ with every vertex of $H$. Note that,
if each vertex in $G_i$ has a twin for $i\in\{1,2\}$, then each
vertex in $G_1+G_2$ has a twin. Hence, we have:
\begin{Corollary}\label{c2}
Let $\Theta$ denote the collection of all connected graphs $G$ with
$fix_f(G)=\frac{|V(G)|}{2}$. If $G_1,G_2\in \Theta$, then
$G_1+G_2\in \Theta$.
\end{Corollary}
\begin{Corollary}\label{c3}
If $fix_f(G)=\frac{n}{2}$, then
$fix_f(G+\overline{K_k})=\frac{n+k}{2}$, for $k\geq 2$.
\end{Corollary}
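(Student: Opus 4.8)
The plan is to reduce this to the twin characterization in Theorem~\ref{mthm}. The upper bound $fix_f(G+\overline{K_k})\le\frac{n+k}{2}$ is automatic from Corollary~\ref{cro1n}, since the join has order $n+k$. So the entire task is to verify condition (ii) of Theorem~\ref{mthm} for $G+\overline{K_k}$, namely that every vertex of the join has a twin; the equality $fix_f(G+\overline{K_k})=\frac{n+k}{2}$ then follows at once. I would stress at the outset why this cannot simply be quoted from Corollary~\ref{c2}: the graph $\overline{K_k}$ is disconnected and hence does not belong to the family $\Theta$, so the join argument must be carried out directly rather than inherited.

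First I would record that, since $fix_f(G)=\frac{n}{2}$, Theorem~\ref{mthm} guarantees every vertex of $G$ has a twin. Next I would treat the two kinds of vertices of the join separately, using the neighbourhood form of the twin relation (two vertices $u,v$ are twins precisely when $N(u)\setminus\{v\}=N(v)\setminus\{u\}$). For two distinct vertices $a,b$ of $\overline{K_k}$ one has $N_{G+\overline{K_k}}(a)=N_{G+\overline{K_k}}(b)=V(G)$, so $a$ and $b$ are twins; because $k\ge 2$, each vertex of $\overline{K_k}$ thus has a twin inside the join. For a vertex $u\in V(G)$ with a twin $v\in V(G)$, forming the join adds the same new neighbours (all of $\overline{K_k}$) to both, so $N_{G+\overline{K_k}}(u)\setminus\{v\}=(N_G(u)\setminus\{v\})\cup V(\overline{K_k})=(N_G(v)\setminus\{u\})\cup V(\overline{K_k})=N_{G+\overline{K_k}}(v)\setminus\{u\}$, whence $u$ and $v$ remain twins in $G+\overline{K_k}$.

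Combining these observations, every vertex of $G+\overline{K_k}$ has a twin. Since the join of $G$ with the nonempty graph $\overline{K_k}$ is connected, Theorem~\ref{mthm} applies and delivers $fix_f(G+\overline{K_k})=\frac{|V(G+\overline{K_k})|}{2}=\frac{n+k}{2}$, as required. I expect the only real care needed lies in the second step — checking that the twin relation among the vertices of $G$ genuinely survives the join and that distinct vertices of the independent set $\overline{K_k}$ become twins — together with the conceptual point that the disconnectedness of $\overline{K_k}$ is exactly what prevents this from being a corollary of Corollary~\ref{c2}.
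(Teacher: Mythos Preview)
Your proof is correct and follows the same route the paper implicitly takes: the paper gives no explicit proof of this corollary, treating it as immediate from the sentence preceding Corollary~\ref{c2} (``if each vertex in $G_i$ has a twin for $i\in\{1,2\}$, then each vertex in $G_1+G_2$ has a twin'') together with Theorem~\ref{mthm}. You have simply written out those details explicitly, including the verification that twins in $G$ remain twins in the join and that the vertices of $\overline{K_k}$ are pairwise twins; your remark that $\overline{K_k}\notin\Theta$ explains why Corollary~\ref{c2} cannot be quoted verbatim, although the underlying twin-preservation observation does not require connectedness and so applies directly.
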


\begin{Theorem}\label{t3}
Any connected graph $H$ can be embedded as an induced subgraph of a
connected graph $G$ with $fix_f(G)=\frac{|V(G)|}{2}$.
\end{Theorem}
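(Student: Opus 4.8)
The plan is to reduce the statement to Theorem~\ref{mthm} by building $G$ from $H$ through a uniform ``twin-doubling'' of every vertex. Concretely, I would take the family $\mathcal{I}=\{I_v\}_{v\in V(H)}$ with $I_v\cong K_2$ for every $v\in V(H)$, and set $G=H[\mathcal I]$, the generalized lexicographic product defined just before Theorem~\ref{mthm}. Since each factor $I_v$ is a non-trivial complete graph, condition (iii) of Theorem~\ref{mthm} holds verbatim, and the implication (iii)$\Rightarrow$(i) gives $fix_f(G)=\frac{|V(G)|}{2}$ with no further work. Thus the entire content of the theorem collapses to two routine verifications about this particular $G$: that it contains $H$ as an \emph{induced} subgraph, and that it is connected.

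For the embedding I would write $V(I_v)=\{w_v^1,w_v^2\}$ and consider the vertex set $S=\{(v,w_v^1):v\in V(H)\}$, which selects exactly one copy from each fiber. For distinct $u,v\in V(H)$, the edge set of $H[\mathcal I]$ makes $(u,w_u^1)$ and $(v,w_v^1)$ adjacent precisely when $u\sim v$ in $H$; and because $S$ meets each fiber in a single vertex, no intra-fiber ($I_v$-type) edges can appear in the induced subgraph. Hence the map $(v,w_v^1)\mapsto v$ is an isomorphism from $G[S]$ onto $H$, so $H$ embeds as an induced subgraph of $G$.

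For connectivity I would argue by lifting $H$-paths: given vertices $(v_1,w_1)$ and $(v_2,w_2)$ of $G$, a path $v_1=a_0,a_1,\ldots,a_k=v_2$ in the connected graph $H$ lifts to a walk in $G$ by choosing any copy in each fiber, using that consecutive fibers are completely joined whenever the corresponding $H$-vertices are adjacent; and within any single fiber the two copies $(v,w_v^1)$ and $(v,w_v^2)$ are adjacent because $I_v=K_2$. The one point I would be careful about is exactly this choice of complete factors: taking $I_v\cong\overline{K_2}$ instead would leave the two copies in a fiber non-adjacent, and connectivity would then rely on each vertex of $H$ having a neighbour, which can fail for trivial $H$; using $K_2$ sidesteps the issue entirely. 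Since $H$ is connected and non-trivial, $G=H[\mathcal I]$ is a connected graph of order $2\,|V(H)|$, and combining this with the embedding and the value $fix_f(G)=\frac{|V(G)|}{2}$ from Theorem~\ref{mthm} completes the argument.
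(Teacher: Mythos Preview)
Your proof is correct and follows essentially the same construction as the paper: both double each vertex of $H$ and join all copies across adjacent fibers, then invoke Theorem~\ref{mthm} via the resulting twins. The only cosmetic difference is that the paper leaves the two copies in each fiber non-adjacent (so effectively $I_v\cong\overline{K_2}$), whereas you take $I_v\cong K_2$; your choice makes the connectivity check slightly cleaner, but otherwise the arguments coincide.
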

\begin{proof}
Let $V(H)=\{u_1, u_2, \cdots, u_n\}$. Consider the graph $G$ formed
from $H$ by replacing each vertex $u_i$ of $H$ by $u_{i_1}$ and
$u_{i_2}$, and joining $u_{i_s}$ to $u_{j_t}$, where
$s,t\in\{1,2\}$, whenever $u_i$ and $u_j$ are adjacent in $H$.
Hence, $u_{i_1}$ and $u_{i_2}$ are twins in $G$, and so
$fix_f(G)=\frac{|G|}{2}$, and $H$ is an induced subgraph of $G$.
\end{proof}
\section{Fractional fixing number of some families of graphs}
In this section, we determine the fractional fixing number of some
families of graphs. A graph $G$ is \emph{vertex-transitive} if its
automorphism group $\Gamma(G)$ acts transitively on the vertex set.
For any two vertices $v$ and $w$ in $V(G)$, $\Gamma_v$ and
$\Gamma_w$ are isomorphic and index of $\Gamma_v$ in $\Gamma(G)$ is
equal to the order of $V(G)$. In the following result, we give the
fractional fixing number of a vertex-transitive graph $G$ in terms
of the parameter $f(G)$.
\begin{Theorem}\label{vertT} Let $G$ be a vertex-transitive graph, then
$fix_f(G)=\frac{|V(G)|}{f(G)}$.
\end{Theorem}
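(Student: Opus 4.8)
The upper bound $fix_f(G)\le \frac{|V(G)|}{f(G)}$ is already in hand from Proposition \ref{l1}, so the whole task is to establish the matching lower bound $fix_f(G)\ge \frac{n}{f(G)}$, where $n=|V(G)|$. My plan is a symmetrization (averaging) argument exploiting the single-orbit structure of a vertex-transitive graph. First I would record the structural setup: since $G$ is vertex-transitive with $n\ge 2$, there is exactly one orbit, so $A(G)=V(G)$, $C(G)=\emptyset$, and $V_a(G)$ consists of all ordered pairs of distinct vertices; in particular $V_a(G)\ne\emptyset$ and $f(G)$ is well defined.

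The technical heart is an equivariance lemma: for every $\pi\in\Gamma(G)$ and all distinct $u,v$,
$$\pi(F(u,v))=F(\pi(u),\pi(v)).$$
To prove it I would first show $\pi(O_x(u))=O_{\pi(x)}(\pi(u))$. This follows because conjugation by $\pi$ carries the stabilizer $\Gamma_x$ onto $\Gamma_{\pi(x)}$, and $\pi(\sigma(u))=(\pi\sigma\pi^{-1})(\pi(u))$, so applying $\pi$ to the $\Gamma_x$-orbit of $u$ produces exactly the $\Gamma_{\pi(x)}$-orbit of $\pi(u)$. Consequently $O_x(u)\ne O_x(v)$ if and only if $O_{\pi(x)}(\pi(u))\ne O_{\pi(x)}(\pi(v))$, i.e. $x\in F(u,v)$ iff $\pi(x)\in F(\pi(u),\pi(v))$, which is the claimed identity. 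I expect this conjugation/orbit bookkeeping to be the main obstacle, since it is the only place one must argue carefully rather than mechanically.

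With equivariance in hand I would run the averaging step. Let $g$ be an optimal fixing function, $|g|=fix_f(G)$, and for each $\pi\in\Gamma(G)$ set $g_\pi(x)=g(\pi(x))$. Reindexing the sum gives $|g_\pi|=|g|$, and using the equivariance identity one checks $g_\pi(F(u,v))=g(F(\pi(u),\pi(v)))\ge 1$, so each $g_\pi$ is again a fixing function. Because the fixing constraints are linear, the average $\bar g=\frac{1}{|\Gamma|}\sum_{\pi}g_\pi$ is a fixing function as well, with $|\bar g|=fix_f(G)$. Vertex-transitivity then forces $\bar g$ to be constant: for $x'=\rho(x)$, replacing $\pi$ by $\pi\rho$ in the defining sum yields $\bar g(x')=\bar g(x)$. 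Hence $\bar g\equiv c$ with $nc=fix_f(G)$.

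Finally I would extract the lower bound. Applying the fixing-function inequality for $\bar g$ to a pair $(u,v)\in V_a(G)$ attaining the minimum $|F(u,v)|=f(G)$ gives $c\,f(G)=\bar g(F(u,v))\ge 1$, so $c\ge \frac{1}{f(G)}$ and therefore $fix_f(G)=nc\ge \frac{n}{f(G)}$. Combining this with Proposition \ref{l1} yields $fix_f(G)=\frac{n}{f(G)}$, as required. One should also note $c=fix_f(G)/n\le \frac{1}{2}\le 1$ by Corollary \ref{cro1n}, so $\bar g$ indeed takes values in $[0,1]$ and is a legitimate fixing function.
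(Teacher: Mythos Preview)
Your proof is correct and follows essentially the same approach as the paper. Both arguments rest on the equivariance $\pi(F(u,v))=F(\pi(u),\pi(v))$ and an averaging over $\Gamma(G)$: the paper sums the inequalities $h(F(\alpha(u),\alpha(v)))\ge 1$ over all $\alpha\in\Gamma(G)$ for a fixed minimizing pair and then applies orbit--stabilizer, while you package the same double count by averaging $g$ itself to a constant feasible function and reading off the bound at a minimizing pair---your version is a bit more explicit about the equivariance and the $[0,1]$ range check, but the underlying computation is identical.
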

\begin{proof}
Let $f(G)=p$. Then there exists a pair of distinct vertices
$(u,v)\in V_a(G)$ such that $|F(u, v)|=p$. Suppose
$F(u,v)=\{r_1,r_2...,r_p\}$. For any automorphism $\alpha$ of $G$,
$F(\alpha(u),\alpha(v))=\{\alpha(r_1),\alpha(r_2),...,\alpha(r_p)\}$.
Let $h$ be a fixing function of $G$ with $fix_f(G)=|h|$. Then
$$h(\alpha(r_1))+h(\alpha(r_2))+...+h(\alpha(r_p))=h(F(\alpha(u),\alpha(v)))\geq
1,$$ which implies that
$$\sum\limits_{\alpha\in
\Gamma(G)}(h(\alpha(r_1))+h(\alpha(r_2))+...+h(\alpha(r_p)))\geq
|\Gamma(G)|.$$ Since $G$ is vertex transitive, we have
$$|\Gamma_{r_1}|.|h|+|\Gamma_{r_2}|.|h|+...+ |\Gamma_{r_p}|.|h |\geq
|\Gamma(G)|$$ which implies that $fix_f(G)\geq \frac{|V(G)|}{p}$. By
Proposition \ref{l1}, we have the required result.
\end{proof}
Since cycle $C_n$ of order $n$ is vertex transitive, therefore we
have the following result:
\begin{Corollary}
For the cycle $C_n$, we have $$fix_f(C_n)=\left\{\begin{array}{ll}
                                       \frac{n}{n-2}, & \textup{if $n$ is even }, \\
                                       \frac{n}{n-1}, & \textup{if $n$ is odd }.
                                     \end{array}\right.
$$
\end{Corollary}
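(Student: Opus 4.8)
The plan is to reduce everything to Theorem~\ref{vertT} and then to compute the single quantity $f(C_n)$. Since the cycle $C_n$ is vertex-transitive, Theorem~\ref{vertT} gives $fix_f(C_n)=\frac{n}{f(C_n)}$, so it suffices to determine $f(C_n)=\min\{|F(u,v)|:(u,v)\in V_a(C_n)\}$. First I would identify $V(C_n)$ with $\mathbb{Z}_n$ and recall that $\Gamma(C_n)$ is the dihedral group of order $2n$; since $|O(w)|=n\ge 2$ for every vertex $w$, we have $A(C_n)=V(C_n)$, and $V_a(C_n)$ consists of all ordered pairs of distinct vertices, so $f(C_n)$ is well defined for every $n\ge 3$.

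The key step is to compute $|F(u,v)|$ for an arbitrary pair. By orbit-stabilizer $|\Gamma_x|=2$, and its nontrivial element is the reflection $\sigma_x$ fixing $x$, given by $\sigma_x(i)=2x-i \pmod n$ (a nontrivial rotation fixes no vertex, so the order-two stabilizer must be generated by a reflection). Hence $\Gamma_x=\{id,\sigma_x\}$, and for distinct $u,v$ one has $O_x(u)=O_x(v)$ precisely when $v=\sigma_x(u)$. Therefore $x\notin F(u,v)$ if and only if $2x\equiv u+v \pmod n$, which turns the computation of $|F(u,v)|$ into counting the solutions $x\in\mathbb{Z}_n$ of this linear congruence.

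Finally I would carry out the counting. When $n$ is odd, $\gcd(2,n)=1$, so $2x\equiv u+v \pmod n$ has exactly one solution; thus $|F(u,v)|=n-1$ for every pair and $f(C_n)=n-1$. When $n$ is even, $\gcd(2,n)=2$, so the congruence has two solutions if $u+v$ is even and none if $u+v$ is odd (note that reduction mod the even number $n$ preserves parity, so the parity of $u+v$ is unambiguous); hence $|F(u,v)|$ equals $n-2$ or $n$ accordingly, and choosing $u,v$ of the same parity, for instance $u=0$ and $v=2$ which are distinct since $n\ge 4$, shows $f(C_n)=n-2$. Substituting into $fix_f(C_n)=\frac{n}{f(C_n)}$ yields the two stated values. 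The main obstacle is the even case: one must observe that the number of solutions of $2x\equiv u+v \pmod n$ depends on the parity of $u+v$, and then verify that the minimal value $n-2$ is genuinely attained, so that the minimum defining $f(C_n)$ is evaluated exactly rather than over- or under-estimated.
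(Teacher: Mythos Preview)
Your proposal is correct and follows exactly the route the paper indicates: the paper merely notes that $C_n$ is vertex-transitive and invokes Theorem~\ref{vertT}, leaving the computation of $f(C_n)$ implicit, whereas you carry out that computation in full by analysing the stabilizer $\Gamma_x=\{id,\sigma_x\}$ and counting solutions of $2x\equiv u+v\pmod n$. There is nothing to correct; your argument simply supplies the details the paper omits.
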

A non-trivial connected graph $G$ is {\em distance-transitive} if
given any two ordered pairs of vertices $(u_1,v_1)$ and $(u_2,v_2)$
such that $d(u_1,v_1)=d(u_2,v_2)$, there is an automorphism $\sigma$
of $G$ such that $(u_2,v_2)=(\sigma(u_1),\sigma(v_1))$.
\begin{Lemma}\label{lemma2}
 Let $u$ and $v$ be two distinct vertices in a distance-transitive graph $G$. Then $R(u,v)=F(u,v)$.
\end{Lemma}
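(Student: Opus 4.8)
The plan is to combine Lemma~\ref{lemma}, which already gives $R(u,v)\subseteq F(u,v)$, with the reverse inclusion $F(u,v)\subseteq R(u,v)$; establishing the latter is all that remains. I would prove it in contrapositive form: if $x\notin R(u,v)$, then $x\notin F(u,v)$. By definition, $x\notin R(u,v)$ means $d(x,u)=d(x,v)$, and $x\notin F(u,v)$ means $O_x(u)=O_x(v)$, so the task reduces to showing that equal distances from $x$ force $u$ and $v$ into the same orbit under the stabilizer $\Gamma_x$.

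The key step is to exploit distance-transitivity with a careful choice of ordered pairs. I would consider the two ordered pairs $(x,u)$ and $(x,v)$. Since $d(x,u)=d(x,v)$, the defining property of a distance-transitive graph supplies an automorphism $\sigma\in\Gamma(G)$ with $\sigma(x)=x$ and $\sigma(u)=v$. The first equality says $\sigma\in\Gamma_x$, and the second then gives $v\in O_x(u)$. Since the orbits of the $\Gamma_x$-action either coincide or are disjoint, we conclude $O_x(u)=O_x(v)$, that is, $x\notin F(u,v)$, as desired. Together with Lemma~\ref{lemma} this yields $R(u,v)=F(u,v)$.

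The step I expect to require the most care is the correct invocation of distance-transitivity: one must apply it to the two pairs sharing the common first coordinate $x$, so that the resulting automorphism simultaneously fixes $x$ and carries $u$ to $v$. Once that is set up correctly, the remainder is immediate from the definitions of $O_x$ and $F(u,v)$. There is no genuine obstacle beyond this bookkeeping; the result follows directly.
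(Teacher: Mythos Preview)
Your proposal is correct and follows essentially the same approach as the paper: both invoke Lemma~\ref{lemma} for one inclusion and, for the other, apply distance-transitivity to the ordered pairs $(x,u)$ and $(x,v)$ when $d(x,u)=d(x,v)$ to produce $\sigma\in\Gamma_x$ with $\sigma(u)=v$, yielding $O_x(u)=O_x(v)$. The only cosmetic difference is that the paper phrases this as a proof by contradiction (starting from $x\in F(u,v)$) rather than as a contrapositive, and it prefaces the argument with the observation that distance-transitive graphs are vertex-transitive and hence non-rigid.
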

\proof Note that all distance-transitive graphs are
vertex-transitive. Then $G$ is non-rigid. Take any $x\in F(u,v)$.
Then $O_x(u)\ne O_x(v)$. If $d(x,u)=d(x,v)$, then there is an
automorphism $\sigma$ of $G$ such that
$(x,v)=(\sigma(x),\sigma(u))$, which implies that $v\in O_x(u)$, and
so $O_x(u)=O_x(v)$, a contradiction. Hence, we have $F(u,v)\subseteq
R(u,v)$. Therefore, we get the desired result by Lemma~\ref{lemma}.
\qed

\medskip

According to the above lemma, we get the following result
immediately.

\begin{Theorem}\label{distancetran}
  For a distance-transitive graph $G$, we have $fix_f(G)=dim_f(G)$.
\end{Theorem}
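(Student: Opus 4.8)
The plan is to deduce the equality from Lemma~\ref{lemma2} together with the inequality $fix_f(G)\le dim_f(G)$ already established for every connected graph. Since that inequality is in hand, it suffices to prove the reverse inequality $dim_f(G)\le fix_f(G)$ in the distance-transitive case.

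First I would fix an optimal fixing function $g$ of $G$, that is, one with $|g|=fix_f(G)$, and argue that this same $g$ is in fact a resolving function. Concretely, for every pair of distinct vertices $u,v\in V(G)$ I need $g(R(u,v))\ge 1$. By Lemma~\ref{lemma2} we have $R(u,v)=F(u,v)$, so $g(R(u,v))=g(F(u,v))$, and the task reduces to verifying $g(F(u,v))\ge 1$ for every distinct pair.

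The one point requiring care---and the only step that is not literally immediate---is that a fixing function only comes with the guarantee $g(F(u,v))\ge 1$ for pairs $(u,v)\in V_a(G)$, whereas a resolving function must control all distinct pairs. Here I would use that a distance-transitive graph is vertex-transitive, so $\Gamma(G)$ has a single orbit $O(u)=V(G)$ for every vertex; since $G$ is non-trivial this gives $|O(u)|\ge 2$, hence $A(G)=V(G)$ and $V_a(G)=\{(u,v): u,v\in V(G),\, u\ne v\}$. Thus the index set of the fixing constraints already coincides with that of the resolving constraints, and the required bound $g(F(u,v))\ge 1$ holds for every distinct pair.

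Combining these observations, $g$ satisfies $g(R(u,v))\ge 1$ for all distinct $u,v$, so $g$ is a resolving function and $dim_f(G)\le |g|=fix_f(G)$. Together with $fix_f(G)\le dim_f(G)$ this yields $fix_f(G)=dim_f(G)$. I expect no genuine obstacle beyond the bookkeeping of confirming that $V_a(G)$ exhausts all distinct pairs; equivalently, one could phrase the entire argument as the observation that for distance-transitive $G$ the feasible regions of the two linear programs defining $fix_f$ and $dim_f$ are identical, which forces their optima to agree.
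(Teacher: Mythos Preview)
Your proof is correct and follows the same approach as the paper, which simply records that the result follows immediately from Lemma~\ref{lemma2}. Your careful verification that $V_a(G)$ exhausts all distinct pairs (via vertex-transitivity) is a detail the paper leaves implicit but is exactly the point needed to pass from fixing functions to resolving functions.
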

The {\em Hamming graph}, denoted by $H_{n,k}$, has the vertex set
$\{(x_1,\ldots,x_n)|1\leq x_i\leq k, 1\leq i\leq n\}$, with two
vertices being adjacent if they differ in exactly one co-ordinate.
Let $X$ be a set of size $n$, and let ${X\choose k}$ denote the set
of all  $k$-subsets of $X$.
 The {\em Johnson graph}, denoted
by $J(n,k)$, has ${X\choose k}$ as the vertex set, where two
$k$-subsets are adjacent if their intersection has size $k-1$.\\

It is well-known that $H_{n,k}$ and $J(n,k)$ are
distance-transitive. The fractional metric dimension of $H_{n,2}$
was computed in \cite{AM}. Feng at el. \cite{FLW} compute
$dim_f(H_{n,k})$ for $k\ge 3$ and $dim_f(J(n,k))$. Combining all
these results and Theorem~\ref{distancetran}, we get

\begin{Corollary}
Let $n$ and $k$ be positive integers at least $2$.

  {\rm(i)} $fix_f(H_{n,k})=\left\{\begin{array}{ll}
                                       2, & \textup{if }k=2, \\
                                       \frac{k}{2}, & \textup{if }k\ge 3.
                                     \end{array}\right.$

{\rm(ii)} If $n\ge 2k$, then
\begin{eqnarray*}
fix_f(J(n,k))=\left\{
\begin{array}{ll}
3,& \textup{ if }(n,k)=(4,2),\\
\frac{35}{17},& \textup{ if }(n,k)=(8,4),\\
\frac{n^2-n}{2kn-2k^2},& \textup{ otherwise. }
\end{array}\right.
\end{eqnarray*}
\end{Corollary}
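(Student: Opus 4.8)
The plan is to deduce every value in the statement from a single structural fact, namely the identity $fix_f(G)=dim_f(G)$ for distance-transitive graphs established in Theorem~\ref{distancetran}, and then to substitute the already-computed fractional metric dimensions of the two families. First I would record the classical fact, recalled in the paragraph preceding the statement, that both $H_{n,k}$ and $J(n,k)$ are distance-transitive; no new argument is needed here, since this is standard. As distance-transitivity is precisely the hypothesis of Theorem~\ref{distancetran}, I may conclude at once that $fix_f(H_{n,k})=dim_f(H_{n,k})$ for all $n,k\ge 2$, and likewise $fix_f(J(n,k))=dim_f(J(n,k))$ whenever $J(n,k)$ is defined, in particular for $n\ge 2k$.

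The second step is purely one of substitution. For the Hamming graph I would quote the value of $dim_f(H_{n,2})$ from \cite{AM} and the value of $dim_f(H_{n,k})$ for $k\ge 3$ from \cite{FLW}; transcribing these into the identity above yields part (i) verbatim. For the Johnson graph, the formula for $dim_f(J(n,k))$ with $n\ge 2k$ — including the two exceptional pairs $(n,k)=(4,2)$ and $(8,4)$ together with the generic expression $\frac{n^2-n}{2kn-2k^2}$ — is exactly the content of \cite{FLW}, so combining it with $fix_f(J(n,k))=dim_f(J(n,k))$ gives part (ii).

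The only points requiring care are bookkeeping rather than mathematics. I would verify that the parameter ranges appearing in the cited fractional-metric-dimension theorems match the ranges asserted here: that $n\ge 2k$ guarantees $J(n,k)$ is non-trivial, that the two exceptional pairs are genuinely excluded from the domain of the generic rational formula, and that the special values are attached to the correct pairs $(n,k)$. Since the hard analytic work — the actual evaluation of the linear programs defining $dim_f(H_{n,k})$ and $dim_f(J(n,k))$ — is carried out in \cite{AM} and \cite{FLW}, the present corollary is essentially a transfer result, and I expect no genuine obstacle beyond this verification of hypotheses and the correct transcription of the cited formulas.
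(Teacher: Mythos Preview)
Your proposal is correct and follows exactly the paper's own approach: the paper states that $H_{n,k}$ and $J(n,k)$ are distance-transitive, invokes Theorem~\ref{distancetran} to obtain $fix_f=dim_f$, and then simply imports the values of $dim_f$ from \cite{AM} and \cite{FLW}. There is no additional argument in the paper beyond this transfer, so your plan matches it in full.
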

The next result is a generalization of Theorem \ref{mthm}. Note that
for $C(G)=\emptyset$, the next theorem coincides with Theorem
\ref{mthm}.
\begin{Theorem}\label{t2}
Let $G$ be a connected graph of order $n$. Then
$fix_f(G)=\frac{n-|C(G)|}{2}$ if and only if each vertex in $A(G)$
has a twin.
\end{Theorem}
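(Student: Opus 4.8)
The plan is to first rewrite the target as $fix_f(G)=\frac{|A(G)|}{2}$, using $n-|C(G)|=|A(G)|$. The rigid case is immediate: then $A(G)=\emptyset$, both sides read $0$, and the twin condition holds vacuously. So assume $G$ is non-rigid. Lemma~\ref{cg} already supplies the upper bound $fix_f(G)\le\frac{|A(G)|}{2}$, so in each direction the only real work is deciding whether the matching lower bound holds. Throughout I would lean on one structural fact, which is the crux upgrading Theorem~\ref{mthm} to this statement: for every $(a,b)\in V_a(G)$ we have $F(a,b)\subseteq A(G)$. Indeed, any $x\in C(G)$ is globally fixed, so $\Gamma_x=\Gamma(G)$ and hence $O_x(a)=O(a)=O(b)=O_x(b)$, giving $x\notin F(a,b)$.

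For the ``if'' direction I would assume every vertex of $A(G)$ has a twin and prove $fix_f(G)\ge\frac{|A(G)|}{2}$, localizing the (iii)$\Rightarrow$(i) step of Theorem~\ref{mthm} to $A(G)$. First observe that twins of $A(G)$-vertices stay in $A(G)$: if $x\in A(G)$ has a twin $y$, then $\iota_{x,y}\in\Gamma(G)$ swaps $x,y$, so $y\in O(x)$, forcing $y\in A(G)$ and $(x,y)\in V_a(G)$. As in Theorem~\ref{mthm}, the relation ``equal or twin'' restricts to an equivalence relation on $A(G)$, partitioning it into classes $O_1,\dots,O_m$ of sizes $s_i=|O_i|\ge 2$ with $\sum_i s_i=|A(G)|$. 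For any fixing function $g$ and any two distinct $u,v$ in one class, Lemma~\ref{twins} gives $F(u,v)=\{u,v\}$, hence $g(u)+g(v)\ge 1$. Averaging these $\binom{s_i}{2}$ inequalities (each vertex occurs in $s_i-1$ pairs) yields $\sum_{x\in O_i}g(x)\ge\frac{s_i}{2}$, and summing over $i$ gives $|g|\ge\sum_{x\in A(G)}g(x)\ge\frac{|A(G)|}{2}$, as required.

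For the ``only if'' direction I would argue the contrapositive: assuming some $u\in A(G)$ has no twin, I exhibit a fixing function of weight strictly below $\frac{|A(G)|}{2}$. Define $g$ to equal $0$ on $C(G)\cup\{u\}$ and $\frac{1}{2}$ on $A(G)\setminus\{u\}$, so $|g|=\frac{|A(G)|-1}{2}$. To check $g$ is a fixing function, take any $(a,b)\in V_a(G)$. If neither coordinate is $u$, then $a,b\in A(G)\setminus\{u\}$ both carry weight $\frac{1}{2}$ and lie in $F(a,b)$ by Lemma~\ref{twins}, so $g(F(a,b))\ge 1$. If one coordinate equals $u$, say $a=u$, then $u$ and $b$ are not twins (since $u$ has none), so Lemma~\ref{twins} gives a third vertex $w\in F(u,b)\setminus\{u,b\}$; by the structural fact $w\in A(G)\setminus\{u\}$, whence $g(w)=\frac{1}{2}$ and $g(F(u,b))\ge g(b)+g(w)=1$. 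Thus $g$ is a fixing function and $fix_f(G)\le\frac{|A(G)|-1}{2}<\frac{|A(G)|}{2}$, contradicting equality.

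The main obstacle is precisely the structural fact $F(a,b)\subseteq A(G)$ for $(a,b)\in V_a(G)$, i.e. that globally fixed vertices are useless for separating an orbit-pair. This is exactly what lets me zero out all of $C(G)$ both in the lower-bound counting and in the ``only if'' construction without endangering any fixing-function inequality; once it is in hand, the remainder is a localization to $A(G)$ of the twin-counting already carried out in Theorem~\ref{mthm}.
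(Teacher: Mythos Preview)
Your proof is correct and follows essentially the same strategy as the paper: the upper bound comes from Lemma~\ref{cg}, the lower bound in the ``if'' direction from summing the twin-pair inequalities $g(u)+g(v)\ge 1$, and the ``only if'' direction from exhibiting a fixing function of smaller weight that zeros out a twin-free vertex (the paper defers this to Theorem~\ref{mthm} rather than writing it out). Your version is more careful in that you make the structural fact $F(a,b)\subseteq A(G)$ for $(a,b)\in V_a(G)$ explicit and use it to justify zeroing out $C(G)$, and you handle the twin-class averaging precisely where the paper's ``adding these $n-|C(G)|$ inequalities'' is somewhat loose.
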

\begin{proof}
By Lemma \ref{cg}, $fix_f(G)\leq \frac{n-|C(G)|}{2}$. Let $h$ be any
fixing function of $G$. Then by Lemma \ref{twins}, $h(u)+h(v)\geq 1$
for all $u,v\in A(G)$. Adding these $n-|C(G)|$ inequalities, we get
$fix_f(G)\geq |h|\geq \sum\limits_{i=1}^{n-|C(G)|}h(u_i)\geq
\frac{n-|C(G)|}{2}$. Therefore, $fix_f(G)=\frac{n-|C(G)|}{2}$.
Converse part of this theorem is straight forward from Theorem
\ref{mthm}.
\end{proof}
The \emph{friendship graph} $F_n$ can be constructed by joining $n$
copies/blocks of the cycle graph $C_3$ with a common vertex.
\begin{Corollary}
For friendship graph $F_n$, $fix_f(F_n)=n$.
\end{Corollary}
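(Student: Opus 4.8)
The plan is to realize $F_n$ as a graph whose non-fixed vertices all come in twin pairs, so that Theorem~\ref{t2} applies directly. Label the common vertex of the $n$ triangles by $c$, and for each triangle $i\in\{1,\ldots,n\}$ let $a_i,b_i$ denote its two remaining vertices; then $c$ is adjacent to all $2n$ outer vertices, $a_i\sim b_i$ for every $i$, and no two outer vertices belonging to different triangles are adjacent. In particular $|V(F_n)|=2n+1$. Since Theorem~\ref{t2} gives $fix_f(F_n)=\frac{|V(F_n)|-|C(F_n)|}{2}$ provided each vertex of $A(F_n)$ has a twin, it suffices to (a) determine $C(F_n)$ and (b) check the twin condition on $A(F_n)$.

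For (a) I would argue that $c$ is the unique fixed vertex. For $n\ge 2$ the vertex $c$ has degree $2n$ while each outer vertex has degree $2$, so $c$ is the unique vertex of maximum degree and is therefore fixed by every automorphism; hence $c\in C(F_n)$. On the other hand, the map swapping $a_i$ with $b_i$ and fixing every other vertex preserves adjacency---both $a_i$ and $b_i$ are adjacent precisely to $c$ and to one another---so it is an automorphism; combining such swaps with the automorphisms permuting the $n$ triangles, any outer vertex can be sent to any other. Thus all $2n$ outer vertices lie in a single orbit of size $2n\ge 2$. Consequently $C(F_n)=\{c\}$ and $A(F_n)=\{a_i,b_i:1\le i\le n\}$, so $|C(F_n)|=1$.

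For (b) I would verify that $a_i$ and $b_i$ are twins for each $i$. Indeed $d(a_i,c)=d(b_i,c)=1$, while for any outer vertex $w$ lying in a different triangle we have $d(a_i,w)=d(b_i,w)=2$, since every such shortest path passes through $c$; as $c$ together with the outer vertices of the other triangles exhausts $V(F_n)\setminus\{a_i,b_i\}$, the pair $a_i,b_i$ are twins. Hence every vertex of $A(F_n)$ has a twin, and Theorem~\ref{t2} yields
$$
fix_f(F_n)=\frac{|V(F_n)|-|C(F_n)|}{2}=\frac{(2n+1)-1}{2}=n .
$$
The only real content here is the structural bookkeeping in (a) and (b); once the twin pairs are identified the conclusion is an immediate instance of Theorem~\ref{t2}, so I expect no genuine obstacle beyond confirming that the central vertex is the single fixed vertex and that it does not pair off as a twin with any outer vertex.
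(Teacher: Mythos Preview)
Your proof is correct and follows essentially the same route as the paper's: both identify the central vertex as the sole element of $C(F_n)$, verify that each pair $a_i,b_i$ are twins, and then invoke Theorem~\ref{t2} to obtain $fix_f(F_n)=\frac{(2n+1)-1}{2}=n$. The only cosmetic difference is that the paper deduces the twin property from the observation $F(a_i,b_i)=\{a_i,b_i\}$ via Lemma~\ref{twins}, whereas you check the distance condition for twins directly.
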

\begin{proof}
For $1\leq i\leq n$, let $H_i=(x,a_i,b_i,x)$ be the $n$ blocks of
$F_n$. Note that $C(F_n)=\{x\}$ and $F(a_i,b_i)=\{a_i,b_i\}$. By
Lemma \ref{twins}, $a_i$ and $b_i$ are twins. Hence by Theorem
\ref{t2}, $fix_f(F_n)=\frac{|V|-|C(F_n)|}{2}=n$.
\end{proof}
The \emph{fan graph} $F_{1,n}$ of order $n+1$ is defined as the join
graph $K_1+P_n$.
\begin{Corollary} For fan graph $F_{1,n}$ with $n\geq 3$,
$$fix_f(F_{1,n}) =\left\{
  \begin{array}{ll}
    2,   \,\,\,\     & \mbox{if}\,\,\,n=3, \\
    1,   \,\,\,\     & \mbox{if}\,\,\,n\geq4.
  \end{array}
\right. $$
\end{Corollary}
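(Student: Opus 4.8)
The plan is to label the vertices explicitly. Let $c$ denote the central vertex coming from the $K_1$, and let $v_1,v_2,\dots,v_n$ be the vertices of $P_n$ in path order, so $c$ is adjacent to every $v_i$ while $v_i\sim v_{i+1}$ for $1\le i\le n-1$. First I would record the degrees: $\deg(c)=n$, the endpoints $v_1,v_n$ have degree $2$, and each interior vertex $v_2,\dots,v_{n-1}$ has degree $3$. The case split at $n=3$ versus $n\ge 4$ is forced precisely by how this degree sequence behaves, so identifying it up front organizes the whole argument.

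For $n=3$, I would note that the only non-edge of $F_{1,3}$ is $v_1v_3$, hence $F_{1,3}\cong K_4-e$. Equivalently, a direct check gives $N[c]=N[v_2]=\{c,v_1,v_2,v_3\}$ and $N(v_1)=N(v_3)=\{c,v_2\}$, so $\{c,v_2\}$ is a pair of (adjacent) twins and $\{v_1,v_3\}$ a pair of (non-adjacent) twins. Thus every vertex of $F_{1,3}$ has a twin, and Theorem~\ref{mthm} yields $fix_f(F_{1,3})=\tfrac{|V(F_{1,3})|}{2}=2$; alternatively one may invoke Example~\ref{c1}(2) with $n=4$.

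For $n\ge 4$, I would determine $\Gamma(F_{1,n})$. Since $n>3$, the center $c$ is the unique vertex of maximum degree $n$, so it is fixed by every automorphism; restricting such an automorphism to $F_{1,n}-c=P_n$, whose automorphism group is $\{\mathrm{id},\rho\}$ with $\rho(v_i)=v_{n+1-i}$, and checking that $\rho$ (extended by $\rho(c)=c$) preserves all fan edges, I obtain $\Gamma(F_{1,n})=\{\mathrm{id},\rho\}$. The orbit of $v_1$ is $\{v_1,v_n\}$, so $v_1\in A(F_{1,n})$, and $\{v_1\}$ is a fixing set because the only non-trivial automorphism $\rho$ sends $v_1$ to $v_n\ne v_1$; as the graph is non-rigid this gives $fix(F_{1,n})=1$. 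Finally, using the fact recorded at the start of Section~3 that $fix(G)=1$ forces $fix_f(G)=1$ (the characteristic function of a one-element fixing set is a fixing function, while $fix_f(G)\ge 1$ for every non-rigid $G$), I conclude $fix_f(F_{1,n})=1$.

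The main obstacle is the automorphism-group computation for $n\ge 4$: one must genuinely rule out automorphisms that move the center, and this is exactly where the hypothesis $n\ge 4$ enters, since it guarantees $\deg(c)=n$ strictly exceeds every path-vertex degree. This is precisely what breaks at $n=3$, where $c$ and $v_2$ both have degree $3$ and become twins, accounting for the jump in value. Once the group is pinned down in each case, both equalities follow from results already available in the paper.
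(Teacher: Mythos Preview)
Your proof is correct and follows essentially the same route as the paper: for $n=3$ you invoke Theorem~\ref{mthm} after checking every vertex has a twin, and for $n\ge 4$ you show a degree-$2$ endpoint forms a singleton fixing set, giving $fix(F_{1,n})=1$ and hence $fix_f(F_{1,n})=1$. The only difference is presentational: you compute $\Gamma(F_{1,n})=\{\mathrm{id},\rho\}$ explicitly before concluding $\Gamma_{v_1}$ is trivial, whereas the paper asserts directly that a degree-$2$ vertex $u$ lies in $F(x,y)$ for every $(x,y)\in V_a(F_{1,n})$---an equivalent statement that implicitly uses the same group-theoretic facts.
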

\begin{proof}
Note that each vertex in $F_{1,3}$ has a twin, so by Theorem
\ref{mthm}, $fix_f(F_{1,3})=2$. Now, suppose $n\geq4$. Take a vertex
$u\in V(F_{1,n})$ of degree $2$. Then for each $(x,y)\in
V_a(F_{1,n})$, we have $u\in F(x,y)$. Since $F_{1,n}$ is not a rigid
graph so one has $fix(F_{1,n})=1$, which implies that
$fix_f(F_{1,n})=1$, as desired.
\end{proof}

For $v\in V(G)$, $G-v$ is known as the vertex deleted subgraph of
$G$ obtained by deleting $v$ from the vertex set of $G$ along with
its incident edges.
\begin{Proposition}\label{propcut}
For a connected graph $G$, $fix_f(G)-1\leq fix_f(G-v)$, where $v$ is
a vertex of $G$.
\end{Proposition}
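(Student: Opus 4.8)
The plan is to lift an optimal fixing function from $G-v$ to $G$ at the cost of one extra unit of weight, mirroring the elementary integer fact that if $S$ is a fixing set of $G-v$ then $S\cup\{v\}$ is a fixing set of $G$: any automorphism of $G$ fixing $S\cup\{v\}$ fixes $v$, hence restricts to an automorphism of $G-v$ fixing $S$, hence is trivial. Assuming $G-v$ is connected so that $fix_f(G-v)$ is defined (a point I would flag at the outset), I would take a fixing function $h$ of $G-v$ with $|h|=fix_f(G-v)$ and define $g\colon V(G)\to[0,1]$ by $g(v)=1$ and $g(x)=h(x)$ for $x\ne v$, so that $|g|=|h|+1=fix_f(G-v)+1$. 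It then suffices to prove that $g$ is a fixing function of $G$, since this gives $fix_f(G)\le|g|=fix_f(G-v)+1$, which is the assertion.

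To check $g(F_G(u,w))\ge 1$ for a pair $(u,w)\in V_a(G)$, I would split on whether $v$ lies in the fixing neighborhood. If $v\in F_G(u,w)$, then trivially $g(F_G(u,w))\ge g(v)=1$. The substantial case is $v\notin F_G(u,w)$: here $O_v(u)=O_v(w)$, so there is $\pi\in\Gamma(G)$ with $\pi(v)=v$ and $\pi(u)=w$; since $\pi$ fixes $v$ it restricts to an automorphism of $G-v$ sending $u$ to $w$, so $u$ and $w$ lie in a common $(G-v)$-orbit of size at least two and $(u,w)\in V_a(G-v)$. Also $u,w\ne v$, since otherwise Lemma~\ref{twins} would put $v$ into $F_G(u,w)$; hence $F_G(u,w)\subseteq V(G-v)$ and $g(F_G(u,w))=h(F_G(u,w))$. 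Thus this case reduces to showing $h(F_G(u,w))\ge 1$, and because $h$ is a fixing function of $G-v$ it would be enough to establish the inclusion $F_{G-v}(u,w)\subseteq F_G(u,w)$, giving $h(F_G(u,w))\ge h(F_{G-v}(u,w))\ge 1$ by nonnegativity of $h$.

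The main obstacle is exactly this inclusion, i.e.\ comparing the fixing neighborhood of the \emph{same} pair before and after deleting $v$. The difficulty is that $\Gamma(G)$ and $\Gamma(G-v)$ are in general incomparable: deleting $v$ destroys automorphisms of $G$ that moved $v$ and may create new automorphisms of $G-v$ that do not extend to $G$, so whether a vertex $x$ belongs to $F_{G-v}(u,w)$ or to $F_G(u,w)$ is decided by different automorphism pools. Arguing the inclusion contrapositively, from $\sigma\in\Gamma(G)$ fixing $x$ with $\sigma(u)=w$ I must produce an automorphism of $G-v$ fixing $x$ with $u\mapsto w$; the natural candidate is a $G$-automorphism fixing both $x$ and $v$ and sending $u$ to $w$, obtained by correcting $\sigma$ with the $v$-fixing automorphism $\pi$, but such a simultaneous fixer of $x$ and $v$ need not exist purely at the level of the abstract group action. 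I expect the resolution to require genuine graph structure beyond the bare action — roughly, that the orbit merging caused by deleting $v$ cannot separate $u$ from $w$ once a $v$-fixing automorphism already identifies them — which I would isolate as an auxiliary lemma; should the clean inclusion fail for an isolated pair, the fallback is to exhibit some alternative pair $(u',w')\in V_a(G-v)$ with $F_{G-v}(u',w')\subseteq F_G(u,w)$ and invoke the corresponding constraint of $h$, exploiting the slack that $h$ typically carries.
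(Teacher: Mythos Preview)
Your strategy coincides exactly with the paper's: take an optimal fixing function $h$ of $G-v$, extend it to $g$ on $V(G)$ by setting $g(v)=1$, and conclude $fix_f(G)\le |g|=|h|+1$. The paper's entire argument consists of writing down this extension and asserting, with no justification whatsoever, that it is a fixing function of $G$; the inequality is then read off. So you have not deviated from the paper's approach---you have merely unpacked the one step the paper skips and discovered that it is not automatic.

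The obstacle you isolate is genuine, and it is present in the paper's proof as well. In the case $v\notin F_G(u,w)$ one needs $h\bigl(F_G(u,w)\bigr)\ge 1$, and the natural route via the inclusion $F_{G-v}(u,w)\subseteq F_G(u,w)$ reduces (through $\Gamma(G)_v\le\Gamma(G-v)$) to the purely group-theoretic claim: if $w\in\Gamma(G)_x\cdot u$ and $w\in\Gamma(G)_v\cdot u$, then $w\in\bigl(\Gamma(G)_x\cap\Gamma(G)_v\bigr)\cdot u$. For abstract group actions this fails---take $A_4$ in its natural action on $\{1,2,3,4\}$ with $x=1$, $v=2$, $u=3$, $w=4$, where the two point stabilizers each move $3$ to $4$ but their intersection is trivial---so any valid argument must invoke additional graph structure that neither you nor the paper supplies. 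Your proposal is therefore already at least as complete as the paper's own proof, and considerably more honest about where the difficulty lies.
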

\begin{proof}
Let $g:V(G-v)\rightarrow V(G-v)$ be a fixing function of $G-v$ such
that $fix_f(G-v)=|g|$. Now, the function $g^{\prime}:V(G)\rightarrow
V(G)$ defined by
$$g^{\prime}(u) =\left\{
  \begin{array}{ll}
    g(u),   \,\,\,\     & \mbox{if}\,\,\,u\neq v,\\
    1,   \,\,\,\     & \mbox{if}\,\,\,u=v.
  \end{array}
\right.$$ is a fixing function of $G$ and hence $fix_f(G)\leq
|g^{\prime}|$. Thus $fix_f(G-v)=|g|=|g^{\prime}|-1\geq fix_f(G)-1$.
\end{proof}
In the following result, the fractional fixing number of trees has
been computed. Let $T=(V(T),E(T))$ be an $n$-vertex non-path tree
with $n\geq 4$, then $fix_f(T)\geq 1$. A vertex of degree one is
called a leaf.
\begin{Theorem}\label{tre}
The fractional fixing number of a tree $T$ with $n$ vertices
satisfies the following statements:
\begin{enumerate}
  \item $0\leq fix_f(T)\leq \frac{n-1}{2}$ and both bounds are
  tight.
  \item Given $n,k\in \mathbb{N}$ with $2\leq k\leq n-1$ and $k\neq
  n-2$, there exists a tree $T$ of order $n$ such that
  $fix_f(T)=\frac{k}{2}$.
  \item A tree $T$ such that $fix_f(T)=0$ can only exists if $n=1$
  or $n\geq 7$.
\end{enumerate}
\end{Theorem}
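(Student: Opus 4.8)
The plan rests on a single structural fact that I would isolate first as a lemma: \emph{in a tree $T$, two distinct vertices are twins if and only if they are leaves with a common neighbour} (the only exception being $T=K_2$). I would prove this straight from the definition. If the two vertices $u,v$ are adjacent, then deleting the edge $uv$ splits $T$ into the component of $u$ and that of $v$; for any $w\neq u$ in $u$'s component one has $d(v,w)=d(u,w)+1$, so being twins forces both components to be singletons, i.e. $T=K_2$. If $u\not\sim v$, then taking $w$ to be the neighbour of $u$ on the unique $u$--$v$ path gives $d(u,w)=1$ and $d(v,w)=d(u,v)-1$, pinning $d(u,v)=2$; and if $u$ had a second neighbour $w'$ then $d(v,w')=3\neq1=d(u,w')$, so $u$ (and symmetrically $v$) must be a leaf. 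With this lemma in hand all three parts become routine.

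For (1), the lower bound $0\le fix_f(T)$ is immediate since fixing functions are nonnegative, and it is attained by any rigid tree, whose existence for $n\ge 7$ comes from part (3). For the upper bound I would use that a tree on $n\ge 3$ vertices has total degree $2(n-1)$, which exceeds $n$, so it contains a vertex $u$ of degree $\ge2$; by the lemma this internal $u$ is twin-free. Then the weighting $g(u)=0$ and $g\equiv\tfrac12$ elsewhere is a fixing function by exactly the argument in the proof of Theorem~\ref{mthm}: any pair $(x,y)\in V_a(T)$ avoiding $u$ satisfies $g(F(x,y))\ge g(x)+g(y)=1$ via Lemma~\ref{twins}, while a pair containing $u$ has $F\neq\{x,y\}$ (as $u$ has no twin), hence a third vertex of weight $\tfrac12$, so again $g(F(x,y))\ge1$. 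This gives $fix_f(T)\le\frac{n-1}{2}$, with equality for the star $K_{1,n-1}$, whose $n-1$ mutually-twin leaves are exactly $A(T)$, so Theorem~\ref{t2} yields $fix_f=\frac{n-1}{2}$.

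For (2), I would realize every admissible value through Theorem~\ref{t2}: it suffices to build a tree with $|A(T)|=k$ in which each vertex of $A(T)$ has a twin, for then $fix_f=\frac{n-|C(T)|}{2}=\frac{k}{2}$. For $k=n-1$ I take $K_{1,n-1}$. For $2\le k\le n-3$ I take the \emph{broom} obtained by attaching to the centre of $K_{1,k}$ a path on $n-k-1\ge 2$ further vertices. I would check that the centre is the unique vertex of degree $\ge3$, hence fixed; that the appended path is then fixed pointwise (its first vertex is the unique degree-$2$ neighbour of the centre, and so on); and that the terminal path vertex, though a leaf, lies at distance $\ge2$ from the centre and is therefore \emph{not} a twin of the $k$ star-leaves. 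Consequently $A(T)$ is precisely those $k$ leaves and $|C(T)|=n-k$, giving $fix_f=\frac{k}{2}$. I would also note that $k=n-2$ is exactly where this construction degenerates: a path on a single extra vertex is merely one more pendant at the centre, collapsing the broom into $K_{1,n-1}$, which explains why the value $n-2$ is excluded here.

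For (3), I would first record that $fix_f(T)=0$ iff $T$ is rigid: rigidity gives $V_a(T)=\emptyset$, so the zero function is vacuously a fixing function, whereas non-rigidity produces an orbit of size $\ge2$, hence a pair in $V_a(T)$ and a constraint $g(F(u,v))\ge1$ forcing $|g|\ge1$. Existence of rigid trees for every $n\ge7$ I would settle with the spider $S(1,2,n-4)$: its three legs have distinct lengths $1,2,n-4$ (valid as $n-4\ge3$) and its centre is the unique degree-$3$ vertex, so any automorphism fixes the centre, permutes legs within equal lengths, and hence acts trivially; it has $1+1+2+(n-4)=n$ vertices. \textbf{The main obstacle is the remaining claim that no rigid tree exists for $2\le n\le 6$.} I would dispatch it by the finite census of trees of those orders (there are $1,1,2,3,6$ of them for $n=2,\dots,6$): by the lemma, any tree with two leaves at a common neighbour is non-rigid, which already eliminates most cases, and the few survivors (the paths and the balanced spiders) each visibly carry a reflective or branch-exchanging automorphism. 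This recovers the classical fact that the least order of an asymmetric tree is $7$, closing (3) and, together with part (1), the tightness of the lower bound.
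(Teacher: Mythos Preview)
Your proposal is correct and follows essentially the same route as the paper: the broom construction for (2) and the star for the sharpness of (1) are exactly the paper's examples, and for (3) the paper simply cites \cite{cac} for the classical fact about asymmetric trees that you re-derive by hand. Your argument for the upper bound in (1) is in fact more careful than the paper's (which rather loosely invokes Theorem~\ref{t2} via the observation that a tree has at most $n-1$ leaves), and your explicit twin-lemma for trees makes the verification in (2) cleaner than the paper's one-line appeal to Theorem~\ref{t2}.
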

\begin{proof}
\begin{enumerate}
  \item By definition, $fix_f(T)\geq 0$. Furthermore, $T$ contains
  at most $n-1$ leaves which implies, by Theorem \ref{t2}, that
  fractional fixing number is at most $\frac{n-1}{2}$. Upper bound is
  sharp for star graph, $K_{1,n-1}$.
  \item Consider $n,k\in \mathbb{N}$ with $2\leq k\leq n-3$, a $u-v$
  path $P_{n-k}$ with  group of leaves $\{v_1,v_2,...,v_k\}$ hanging
  from $v$. Theorem \ref{t2} implies its fractional fixing number is
  $\frac{k}{2}$. The star $K_{1,n-1}$ serves as example for $k=n-1$.
  \item It was proved in \cite{cac} that a tree $T$ such that $fix(T)=0$
  can only exists if $n=1$ or $n\geq 7$. This implies $fix_f(T)=0$ can exists only if $n=1$ or $n\geq 7$.
  \end{enumerate}
\end{proof}
There exists families of graphs for which for which $dim_f(T)$ and
$fix_f(T)$ are equal. Consider a tree $T$ formed by connecting a
single vertex $u$ to $k$ paths denoted by
$P_m,P_{m+1},...,P_{m+k-1}$ with lengths $m,m+1,...,m+k-1$,
respectively. It is clear that such a tree is a rigid graph and
$fix_f(T)=0$ and it was shown in \cite{KS} that
$dim_f(T)=\frac{k}{2}$. Hence there exist graphs for which the
difference between $dim_f(T)$ and $fix_f(T)$ can be arbitrarily
large.

\begin{Corollary}
For the wheel $W_n$, $n\geq 5$, we have
$$fix_f(W_n)=\left\{\begin{array}{ll}
                                       \frac{n}{n-2}, & \textup{if $n-1$ is even}, \\
                                       \frac{n}{n-3}, & \textup{if $n-1$ is odd}.
                                     \end{array}\right.
$$
\end{Corollary}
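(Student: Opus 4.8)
The plan is to reduce the computation for the wheel to that of its rim cycle, which has already been handled by the corollary for $C_n$. Write $W_n$ as a hub $c$ joined to every vertex of a rim cycle $C_{n-1}$ on vertices $v_0,\dots,v_{n-2}$. First I would record the symmetry data. For $n\ge 5$ the hub has degree $n-1>3$, while each rim vertex has degree $3$; hence $c$ is a fixed vertex, so $C(W_n)=\{c\}$ and $A(W_n)=\{v_0,\dots,v_{n-2}\}$. Moreover every automorphism of $W_n$ fixes $c$ and permutes the rim, so $\Gamma(W_n)$ acts on the rim exactly as the dihedral group $D_{n-1}$ acts on $C_{n-1}$, and the rim is a single orbit. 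Consequently $V_a(W_n)$ consists precisely of the pairs of distinct rim vertices.

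The key step is to compute the fixing neighborhoods of these pairs and to show that the hub is irrelevant to all of them. Since $\Gamma_c=\Gamma(W_n)$, the orbit $O_c(v_i)$ is the whole rim for every $i$, so $O_c(v_i)=O_c(v_j)$ and hence $c\notin F(v_i,v_j)$ for every rim pair. For a rim vertex $v_k$, its stabilizer in $D_{n-1}$ is $\{\mathrm{id},\rho_k\}$, where $\rho_k$ is the reflection fixing $v_k$, so $O_{v_k}(v_i)=\{v_i,\rho_k(v_i)\}$ and therefore $v_k\notin F(v_i,v_j)$ exactly when $\rho_k$ interchanges $v_i$ and $v_j$. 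This is the very condition that describes the fixing neighborhoods inside the plain cycle $C_{n-1}$; thus for every rim pair the set $F_{W_n}(v_i,v_j)$ is contained in the rim and coincides with the fixing neighborhood $F_{C_{n-1}}(v_i,v_j)$.

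With these coincidences in hand I would finish as follows. Because the hub lies in no fixing neighborhood, in the linear program defining $fix_f(W_n)$ we may set $g(c)=0$ without violating any constraint, and the remaining constraints are literally the constraints of the program for $C_{n-1}$. Hence the optimal value for $W_n$ agrees with that of its rim, and the answer is obtained by substituting the rim length $n-1$ into the corollary already proved for cycles, selecting the case according to the parity of $n-1$.

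I expect the main obstacle to be pinning down the exact value rather than an inequality. The wheel is not vertex-transitive, so Theorem~\ref{vertT} cannot be applied to it directly; the uniform fixing function of Proposition~\ref{l1} only yields the upper bound $fix_f(W_n)\le n/f(W_n)$, and for $n\ge 5$ the rim vertices are not twins, so Theorem~\ref{t2} gives nothing sharp either. The substantive content is therefore the reduction of the previous paragraphs—verifying that the hub sits in no fixing neighborhood and that the rim pairs inherit their fixing neighborhoods unchanged from $C_{n-1}$—which is exactly what licenses replacing the wheel by its rim and reading the value off the cycle corollary. The final parity bookkeeping, matching the case $n-1$ even or odd to the correct fraction, is routine but is precisely the place where an off-by-one or a case swap is easiest to commit, so I would double-check it against a small instance such as $W_6$.
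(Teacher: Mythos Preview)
Your reduction is exactly the paper's argument, only argued more carefully: the paper's proof is the single sentence that the center lies outside $A(W_n)$ and that $\Gamma(W_n)$ acts on the rim as $\Gamma(C_{n-1})$, ``therefore we have the required result.'' So as far as strategy goes you are aligned with the paper.

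The gap is in the last step, and it is the very one you flagged. Your reduction correctly shows that the hub lies in no fixing neighborhood and that $F_{W_n}(v_i,v_j)=F_{C_{n-1}}(v_i,v_j)$ for every rim pair, hence the two linear programs coincide and $fix_f(W_n)=fix_f(C_{n-1})$. But substituting $n-1$ into the cycle corollary gives
\[
fix_f(C_{n-1})=\begin{cases}\dfrac{n-1}{\,n-3\,},& n-1\ \text{even},\\[6pt]\dfrac{n-1}{\,n-2\,},& n-1\ \text{odd},\end{cases}
\]
which does \emph{not} match the stated formula. Your suggested sanity check on $W_6$ confirms this: with $n-1=5$ odd the reduction yields $fix_f(W_6)=fix_f(C_5)=5/4$, whereas the corollary as stated claims $6/3=2$. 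Similarly $W_5$ has rim $C_4$ with twin antipodal pairs, so $fix_f(W_5)=fix_f(C_4)=2$, not the claimed $5/3$. Thus the paper's terse proof and your more detailed version both establish $fix_f(W_n)=fix_f(C_{n-1})$, but the displayed formula in the corollary is simply wrong (the numerator should be $n-1$, and the denominators are paired with the wrong parity cases); your proposal does not, and cannot, prove it as written.
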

\begin{proof}
Let $V(W_n)=\{u_1,u_2,...,u_{n-1},u\}$ where $u$ is the center of
the wheel and $C_{n-1}=(u_1,u_2,...,u_{n-1})$, $n\geq 4$, is the
rim. Since the center vertex $u$ does not belong to $A(W_n)$,
therefore the automorphism group of $W_n$ is same as that of cycle
$C_{n-1}$. Therefore, we have the required result.
\end{proof}

In \cite{AM}, it was shown that $dim_f(W_n)=\frac{n-1}{4}$ for
$n\geq 7$. Please note $dim_f(W_n)-fix_f(W_n)\rightarrow \infty$ as
$n\rightarrow\infty$.
\section{Fractional fixing number of corona product of graphs}
Let $G$ and $H$ be two graphs with $|V(G)|=m$ and $|V(H)|=n$.
\emph{Corona product} of $G$ and $H$, denoted by $G\odot H$, is the
graph obtained from $G$ and $H$ by taking one copy of $G$ and $m$
copies of $H$ and joining each vertex from the $i^{th}$-copy of $H$
by an edge with the $i^{th}$-vertex of $G$. Let $u\in V(G)$ then
$H_u$ be the copy of $H$ corresponding to the $u$-vertex of $G$. We
write $H_u = \{(u,v):v\in V(H)\}$ for $u\in V(G)$. For $x,y\in
V(G\odot H)$, the fixing neighborhood of $x,y$ is denoted by
$F_{G\odot H}(x,y)$ and $F_G(x,y)$ denotes the fixing neighborhood
of $x,y$ in $G$.
\begin{Lemma}\label{1c}
Let $G$ be a connected graph of order $m\geq 2$ and $H$ be an
arbitrary graph. Let $(x,y)\in V_a(G\odot H)$.
\begin{enumerate}
  \item If $\{(x,y)\}\subseteq V_a(H_u)$ for some $u\in V(G)$, say $x=(u,v_1)$ and
$y=(u,v_2)$, then
$$F_{G\odot H}(x,y)=F_H(v_1,v_2).$$
  \item If $\{x,y\}\not\subseteq V_a(H_u)$ for any $u\in V(G)$, then there
exists a vertex $u_0$ of $G$ such that $H_{u_0}\subseteq F_{G\odot
H}(x,y).$
\end{enumerate}
\end{Lemma}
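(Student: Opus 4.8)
The plan is to first determine the automorphism structure of the corona $G\odot H$ and then read off both parts by direct orbit computations. The structural input I would establish is a degree argument: since $G$ is connected with $m\ge 2$, every central vertex $u\in V(G)$ has degree $\deg_G(u)+n\ge n+1$ in $G\odot H$, while every pendant-copy vertex $(u,v)\in H_u$ has degree $\deg_H(v)+1\le n$. Because automorphisms preserve degree, this forces $\pi(V(G))=V(G)$ for every $\pi\in\Gamma(G\odot H)$, so $\sigma:=\pi|_{V(G)}\in\Gamma(G)$; and since $H_u$ consists exactly of the pendant-copy vertices adjacent to $u$, we get $\pi(H_u)=H_{\sigma(u)}$. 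Writing $\pi(u,v)=(\sigma(u),\tau_u(v))$, each $\tau_u$ must be an automorphism of $H$. Conversely, any choice of $\sigma\in\Gamma(G)$ together with $\{\tau_u\in\Gamma(H)\}_{u\in V(G)}$ defines an automorphism; in particular, for a fixed $u$ and any $\rho\in\Gamma(H)$, the map acting as $\rho$ on $H_u$ and as the identity elsewhere lies in $\Gamma(G\odot H)$. This explicit description drives everything that follows, and proving it carefully is the main (if routine) obstacle.

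For part (1), note first that $(x,y)=((u,v_1),(u,v_2))\in V_a(G\odot H)$ forces $(v_1,v_2)\in V_a(H)$, so $v_1,v_2$ lie in a common $\Gamma(H)$-orbit. I would then test each candidate $z$. If $z=(u,w)\in H_u$, then $\pi\in\Gamma_z$ forces $\sigma(u)=u$ and $\tau_u(w)=w$, so $O_z((u,v_i))=\{(u,v'):v'\in O^H_w(v_i)\}$, where $O^H_w$ denotes an orbit under the stabilizer $\Gamma(H)_w$; hence $z$ fixes the pair exactly when $O^H_w(v_1)\ne O^H_w(v_2)$, i.e.\ when $w\in F_H(v_1,v_2)$. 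If $z\notin H_u$ (either central, or in a copy $H_{u''}$ with $u''\ne u$), I would use the automorphism $\beta$ acting as some $\rho\in\Gamma(H)$ with $\rho(v_2)=v_1$ on $H_u$ and as the identity elsewhere: this $\beta$ fixes $z$ and sends $y$ to $x$, so composing elements of $\Gamma_z$ with $\beta$ yields $O_z(x)=O_z(y)$, whence $z$ does not fix the pair. Under the identification $H_u\cong H$ via the second coordinate, this gives precisely $F_{G\odot H}(x,y)=F_H(v_1,v_2)$.

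For part (2), the hypothesis says $x,y$ do not lie in one common pendant copy. Since central and pendant-copy vertices have disjoint degree ranges, $O(x)=O(y)$ leaves only two possibilities: both $x,y$ central, or $x=(u_1,v_1)$ and $y=(u_2,v_2)$ with $u_1\ne u_2$. In either case I would take $u_0$ to be the $G$-coordinate of $x$ (so $u_0=x$ if $x\in V(G)$, and $u_0=u_1$ otherwise) and prove $H_{u_0}\subseteq F_{G\odot H}(x,y)$ by showing $O_z(x)\ne O_z(y)$ for every $z=(u_0,w)$. The unifying observation is that $\pi\in\Gamma_z$ forces $\sigma(u_0)=u_0$: therefore every vertex of $O_z(x)$ has $G$-coordinate $\sigma(u_0)=u_0$, while every vertex of $O_z(y)$ has $G$-coordinate $\sigma(\cdot)\ne u_0$ (as $\sigma$ fixes $u_0$, is injective, and the $G$-coordinate of $y$ differs from $u_0$). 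The two orbits thus have disjoint $G$-coordinates and cannot coincide, for every choice of $w$, which gives $H_{u_0}\subseteq F_{G\odot H}(x,y)$. The one delicate point is the reduction to these two cases, which again rests on the degree separation from the first paragraph.
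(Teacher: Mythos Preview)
Your proof is correct and follows essentially the same approach as the paper's: both arguments rest on the fact that automorphisms of $G\odot H$ preserve the partition $V(G)\cup\bigcup_u H_u$ and permute the copies $H_u$ according to an automorphism of $G$, after which the orbit computations for the two cases are direct. The paper's proof is considerably terser---it simply asserts the existence of the automorphism $\alpha\in\Gamma_{V(G\odot H)\setminus H_u}$ and the claim ``fixing any vertex of $H_{u_1}$ prevents $x$ from mapping to $y$'' without justification---so your explicit degree-separation argument and the construction of $\beta$ actually fill in the details the paper leaves implicit.
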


\begin{proof}
(1) If $\{(x,y)\}\subseteq V_a(H_u)$ for any $u\in V(G)$, then it is
clear that there exists $\alpha\in \Gamma_{V(G\odot H)\setminus
H_u}$ such that $\alpha(x)=y$, therefore we have $F_{G\odot
H}(x,y)\subseteq H_u$. Note that $(r,s)\in F_{G\odot H}(x,y)$ is
equivalent to $s\in F_H(v_1,v_2)$. Hence, the desired result follows.\\
(2) Note that for $x\in V(G)$ and $y\in H_u$ or $y\in V(G)$ and
$x\in H_u$, $(x,y)\notin V_a(G\odot H)$. Now,
we have two cases:\\
Case 1: Let $(x,y)\in V_a(G)$, then for $\alpha\in \Gamma(G)$,
$\alpha(x)=y$ if and only if $\alpha(H_x)=H_y$. This implies that
$H_x$ and
$H_y\subseteq F_{G\odot H}(x,y)$.\\
Case 2: Let $x\in H_{u_1}$ and $y\in H_{u_2}$ for two distinct
vertices $u_1,u_2\in V(G)$. It is clear that by fixing any vertex of
$H_{u_1}$ or $H_{u_2}$, $x$ cannot be mapped on $y$. Therefore,
$H_{u_1}\subseteq F_{G\odot H}(x,y)$.
\end{proof}
Fixing number of corona product of non-rigid graphs has been studied
by Javaid {\it et al.} and they proved that $fix(G\odot H)=mfix(H)$
\cite{iijav}. It is interesting to note that a similar result is
true for the fractional fixing number of $G\odot H$ as well.
\begin{Theorem}\label{2c}
Let $G$ be a connected graph and $H$ be a non-rigid graph with
$|V(G)|=m\geq 2$. Then $fix_f(G\odot H)= mfix_f(H)$.
\end{Theorem}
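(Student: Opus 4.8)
The plan is to prove the two inequalities $fix_f(G\odot H)\le m\,fix_f(H)$ and $fix_f(G\odot H)\ge m\,fix_f(H)$ separately, in each case using Lemma~\ref{1c} to translate the fixing constraints of $G\odot H$ into constraints living inside the individual copies $H_u$.

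For the upper bound I would start from an optimal fixing function $g$ of $H$ with $|g|=fix_f(H)$ and define $g'$ on $V(G\odot H)$ by $g'((u,v))=g(v)$ on every copy $H_u$ and $g'(w)=0$ for $w\in V(G)$; then $|g'|=m|g|=m\,fix_f(H)$. To see that $g'$ is a fixing function I would verify the constraint for each $(x,y)\in V_a(G\odot H)$ according to the two cases of Lemma~\ref{1c}. If the pair lies inside a single copy, say $x=(u,v_1),\,y=(u,v_2)$, then $F_{G\odot H}(x,y)=F_H(v_1,v_2)$, and since $g'$ restricted to $H_u$ coincides with $g$, the relevant sum equals $g(F_H(v_1,v_2))\ge 1$. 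Otherwise some whole copy $H_{u_0}$ is contained in $F_{G\odot H}(x,y)$, so the accumulated weight is at least $g'(H_{u_0})=|g|=fix_f(H)$, which is $\ge 1$ because $H$ is non-rigid. Hence $g'$ is a fixing function and $fix_f(G\odot H)\le m\,fix_f(H)$.

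For the lower bound, let $h$ be an optimal fixing function of $G\odot H$ and, for each $u\in V(G)$, define its local restriction $h_u:V(H)\to[0,1]$ by $h_u(v)=h((u,v))$. The crux is to show that each $h_u$ is itself a fixing function of $H$. The key observation is that every automorphism of $H$ extends to an automorphism of $G\odot H$ that acts on the single copy $H_u$ and fixes $u$, the rest of $G$, and all other copies; consequently, whenever $(v_1,v_2)\in V_a(H)$ we get $((u,v_1),(u,v_2))\in V_a(G\odot H)$ and this is a within-copy pair, so Lemma~\ref{1c}(1) yields $F_{G\odot H}((u,v_1),(u,v_2))=F_H(v_1,v_2)$. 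Applying the fixing inequality for $h$ to this pair gives $h_u(F_H(v_1,v_2))=h(F_{G\odot H}((u,v_1),(u,v_2)))\ge 1$, so $h_u$ is a fixing function and $|h_u|\ge fix_f(H)$. Summing over the $m$ copies and discarding the non-negative weights assigned to $V(G)$ gives $fix_f(G\odot H)=|h|\ge\sum_{u\in V(G)}|h_u|\ge m\,fix_f(H)$.

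Combining the two inequalities yields the claimed equality. I expect the main obstacle to be the lower-bound step, specifically the verification that the local restriction $h_u$ is genuinely a fixing function of $H$: this rests on checking that internal pairs of $H$ lift to pairs of $V_a(G\odot H)$ to which Lemma~\ref{1c}(1) applies, which in turn relies on the (routine but essential) fact that $\Gamma(H)$ embeds into $\Gamma(G\odot H)$ as automorphisms acting on one copy at a time. The upper bound is comparatively soft, its only real ingredient being $fix_f(H)\ge 1$, guaranteed by the non-rigidity hypothesis on $H$.
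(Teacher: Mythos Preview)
Your proposal is correct and follows essentially the same two-inequality strategy as the paper: restrict an optimal fixing function of $G\odot H$ to each copy $H_u$ via Lemma~\ref{1c}(1) for the lower bound, and lift an optimal fixing function of $H$ to all copies (assigning $0$ on $V(G)$) for the upper bound. If anything, your write-up is more careful than the paper's in two places---you explicitly justify why $(v_1,v_2)\in V_a(H)$ forces $((u,v_1),(u,v_2))\in V_a(G\odot H)$ via the embedding $\Gamma(H)\hookrightarrow\Gamma(G\odot H)$, and you spell out the second case of the upper bound using Lemma~\ref{1c}(2) together with $fix_f(H)\ge 1$---whereas the paper leaves both of these implicit.
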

\begin{proof}
Let $g$ be a fixing function of $G\odot H$ with $|g|=fix_f(G\odot
H)$. For each $u\in V(G)$, define $g_u:V(H)\rightarrow [0,1]$ such
that $ v\mapsto g((u,v)).$ For $(v_1,v_2)\in V_a(H)$, by Lemma
\ref{1c},
$$g_u(F_H(v_1,v_2))=\sum_{v\in F_H(v_1,v_2)} g((u,v))=g(F_{G\odot H}((u,v_1),(u,v_2)))\geq1,$$
which implies that
$$|g_u|\geq fix_f(H).$$
Since $V(H)\subseteq V(G\odot H)$, we have
$$|g|\geq \sum_{u\in V(G)}|g_u|.$$
Hence, $fix_f(G\odot H)\geq mfix_f(H).$ Now, we show that
$fix_f(G\odot H)\leq mfix_f(H).$ Note that for any pair $(u,v) \in
V_a(G)$, $H_u\subseteq F_{G\odot H}(u,v)$ and $H_v\subseteq
F_{G\odot H}(u,v)$. Let $h:V(H)\rightarrow[0,1]$ is a fixing
function of $H$ such that $|h|=fix_f(H)$. Define
$$
h': V(G\odot H)\rightarrow[0,1],\qquad w\mapsto\left\{
\begin{array}{ll}
h(y),& \textup{if } w=(x,y),\\
0,& \textup{if } w\in V(G).
\end{array}
\right.
$$ Note that $h'$ is a fixing function of $G\odot H$. Hence, $fix_f(G\odot H)\leq mfix_f(H)$ and the
result follows.
\end{proof}
\begin{Theorem}
  Let $G$ be a connected graph of order at least $2$ and $H$ be a rigid graph. Then
  $$
  fix_f(G\odot H)=fix_f(G).
  $$
\end{Theorem}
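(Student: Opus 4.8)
The plan is to reduce everything to the automorphism structure of $G\odot H$ and then transfer fixing functions between $G\odot H$ and $G$. The central claim I would establish first is that, because $H$ is rigid, $\Gamma(G\odot H)\cong\Gamma(G)$ with an explicit action: every $\pi\in\Gamma(G\odot H)$ satisfies $\pi(V(G))=V(G)$, its restriction $\bar\pi:=\pi|_{V(G)}$ lies in $\Gamma(G)$, and $\pi((u,v))=(\bar\pi(u),v)$ for every $(u,v)\in H_u$. The first component of this claim, that $V(G)$ is preserved, is what I expect to be the main obstacle, and I would prove it by a cut-vertex argument. Since $G$ is connected and $m\ge 2$, each $u\in V(G)$ is a cut vertex of $G\odot H$: deleting $u$ severs $H_u$ (whose vertices meet the rest of the graph only through $u$) from the remaining copies. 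Conversely, no vertex $(u,v)\in H_u$ is a cut vertex, because $u$ dominates $H_u$, so $G\odot H-(u,v)$ stays connected. As automorphisms preserve cut vertices, $\pi(V(G))=V(G)$. Once this is in hand, $\bar\pi\in\Gamma(G)$ is immediate, and since the only $G$-neighbor of a vertex of $H_u$ is $u$, one gets $\pi(H_u)=H_{\bar\pi(u)}$; the induced bijection $H_u\to H_{\bar\pi(u)}$ is an automorphism of $H$, hence the identity by rigidity, giving $\pi((u,v))=(\bar\pi(u),v)$.

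From this action I would read off the pairs to be fixed and their fixing neighborhoods. The orbit of $(u,v)$ is $O_G(u)\times\{v\}$ and the orbit of $u\in V(G)$ is $O_G(u)$, so $V_a(G\odot H)$ consists precisely of the pairs in $V_a(G)$ together with the pairs $((u,v),(u',v))$ for $(u,u')\in V_a(G)$ and $v\in V(H)$; no pair mixes a $G$-vertex with an $H$-vertex, and no two vertices of a single copy are equivalent. A short stabilizer computation (the stabilizer of any vertex of the blob $\{t\}\cup H_t$ corresponds to the $G$-stabilizer $\Gamma_t^G$) then shows that for each such pair the fixing neighborhood is the same \emph{blocky} set $F_{G\odot H}(u,u')=\bigcup_{t\in F_G(u,u')}(\{t\}\cup H_t)$, consistent with Lemma~\ref{1c}. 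Thus every constraint defining a fixing function of $G\odot H$ is indexed by a pair $(u,u')\in V_a(G)$ and only sees the total weight each blob carries.

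Finally I would prove the two inequalities. For $fix_f(G\odot H)\le fix_f(G)$, extend an optimal fixing function $g_0$ of $G$ by $0$ on all $H$-vertices; the blocky description makes $g(F_{G\odot H}(u,u'))=g_0(F_G(u,u'))\ge 1$, so $|g|=|g_0|=fix_f(G)$. For the reverse inequality, take an optimal fixing function $g$ of $G\odot H$ and collapse each blob onto its $G$-vertex by setting $w(t)=g(t)+\sum_{v\in V(H)}g((t,v))$ and $g_0(t)=\min\{1,w(t)\}$. Each constraint $\sum_{t\in F_G(u,u')}w(t)\ge 1$ forces $g_0(F_G(u,u'))\ge 1$ (if some $w(t)\ge 1$ the clamped term alone already gives $1$, otherwise no clamping occurs), so $g_0$ is a fixing function of $G$, while clamping only decreases weight, giving $fix_f(G)\le|g_0|\le|g|=fix_f(G\odot H)$. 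Combining the two inequalities yields the claim. The degenerate case where $G$ is itself rigid is covered automatically, since then $V_a(G)=\emptyset$ and both sides are $0$.
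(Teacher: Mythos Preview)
Your proof is correct and follows essentially the same two-inequality scheme as the paper: collapse each blob $\{t\}\cup H_t$ onto $t$ for one direction, and extend an optimal $G$-function by zero on the $H$-copies for the other. The paper simply \emph{asserts} the structural facts you derive---that for $(x,y)\in V_a(G\odot H)$ either $(x,y)\in V_a(G)$ or $(x,y)=((u_1,v),(u_2,v))$ with $(u_1,u_2)\in V_a(G)$, and that $F_{G\odot H}(u_1,u_2)=F_G(u_1,u_2)\cup\bigcup_{u\in F_G(u_1,u_2)}H_u$---whereas you obtain them from an explicit description of $\Gamma(G\odot H)$ via the cut-vertex argument; this extra work is not strictly different in spirit, but it fills a gap the paper leaves. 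One place where you are actually more careful than the paper: in the collapse direction the paper defines $g'(u)=g(u)+\sum_{v\in H_u}g((u,v))$ and calls it a fixing function of $G$, but this sum may exceed $1$, so by the paper's own definition (codomain $[0,1]$) it need not be one. Your clamping $g_0(t)=\min\{1,w(t)\}$ repairs this while preserving the inequality, so your argument is in fact the cleaner of the two.
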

\proof
If $G$ is a rigid graph, then $G\odot H$ is a rigid graph,
and so $fix_f(G\odot H)=0=fix_f(G)$. In the following, suppose that
$G$ is not a rigid graph. Then $G\odot H$ is not a rigid graph.

Let $g$ be a fixing function of $G\odot H$ with $|g|=fix_f(G\odot
H)$. Define
$$
g': V(G)\rightarrow[0,1], \qquad u\mapsto g(u)+\sum_{v\in
H_u}g((u,v)).
$$
For any $(u_1,u_2)\in V_a(G)$, we have
$$
F_{G\odot H}(u_1,u_2)=F_G(u_1,u_2)\cup\bigcup_{u\in
F_{G}(u_1,u_2)}H_u,
$$
which implies that $g'(F_G(u_1,u_2))=g(F_{G\odot H}(u_1,u_2))\ge 1$,
and so $g'$ is a fixing function of $G$ with $|g'|=|g|$. Therefore,
one has $fix_f(G)\le fix_f(G\odot H)$.

Let $h$ be a fixing function of $G$ with $|h|=fix_f(G)$. Define
$$
h': V(G\odot H)\rightarrow[0,1],\qquad u\mapsto\left\{
\begin{array}{ll}
h(u),& \textup{if } u\in V(G),\\
0,& \textup{otherwise.}
\end{array}
\right.
$$
Note that $H$ is a rigid graph. For $(x,y)\in V_a(G\odot H)$, we
have $(x,y)\in V_a(G)$, or $(x,y)=((u_1,v),(u_2,v))$ for some
$((u_1,u_2),v)\in V_a(G)\times V(H)$. It follows that $h'$ is a
fixing function of $G\odot H$ with $|h'|=|h|$. Thus, we get
$fix_f(G\odot H)\le fix_f(G)$. Hence, the desired result follows.
\qed

\medskip

Let $H$ be a graph with maximum degree less than $|V(H)|-1$. If $H$
is a rigid graph, then $K_1\odot H$ is a rigid graph, and so
$fix_f(K_1\odot H)=0=fix_f(H)$. If $H$ is not a rigid graph, then
$fix_f(K_1\odot H)=fix_f(H)$ by a similar proof of Theorem \ref{2c}.
Consequently, we have

\begin{Theorem}
  Let $H$ be a graph with maximum degree less than $|V(H)|-1$. Then
  $$
  fix_f(K_1\odot H)=fix_f(H).
  $$
\end{Theorem}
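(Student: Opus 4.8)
The plan is to reduce the whole statement to understanding the automorphism group of $K_1\odot H$, which the degree hypothesis pins down completely. Write $w$ for the single vertex of $K_1$, so that $V(K_1\odot H)=\{w\}\cup H_w$ with $H_w=\{(w,v):v\in V(H)\}$ the copy of $H$, and $w$ adjacent to every vertex of $H_w$. First I would prove the decisive structural fact that $w$ is a fixed vertex. Since $w$ is adjacent to all $|V(H)|$ vertices of $H_w$, one has $\deg(w)=|V(H)|$, whereas every $(w,v)\in H_w$ has degree $\deg_H(v)+1\le\Delta(H)+1<|V(H)|$, where $\Delta(H)$ is the maximum degree of $H$. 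Hence $w$ is the \emph{unique} vertex of maximum degree, so every automorphism fixes $w$ and restricts to an automorphism of the induced subgraph on $H_w$ (which is isomorphic to $H$), while every automorphism of $H$ extends by fixing $w$. This yields $\Gamma(K_1\odot H)\cong\Gamma(H)$ with $w\in C(K_1\odot H)$, and therefore $V_a(K_1\odot H)=\{((w,v_1),(w,v_2)):(v_1,v_2)\in V_a(H)\}$.

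With this in hand the argument splits. If $H$ is rigid, then $\Gamma(H)$ is trivial, so $\Gamma(K_1\odot H)$ is trivial as well, giving $fix_f(K_1\odot H)=0=fix_f(H)$. So I may assume $H$ is non-rigid and then mimic the proof of Theorem~\ref{2c} in the special case $m=1$. The key input is the $m=1$ analogue of Lemma~\ref{1c}(1), namely the fixing-neighborhood identity $F_{K_1\odot H}((w,v_1),(w,v_2))=\{(w,v):v\in F_H(v_1,v_2)\}$ for each $(v_1,v_2)\in V_a(H)$. To verify it I would use that $w$ is fixed, so $\Gamma_w=\Gamma(K_1\odot H)$ and hence $O_w((w,v_1))=O((w,v_1))=O((w,v_2))=O_w((w,v_2))$, showing $w$ never fixes the pair; and for a vertex $(w,v)\in H_w$ the stabilizer of $(w,v)$ corresponds under the isomorphism to the stabilizer of $v$ in $\Gamma(H)$, so $(w,v)$ fixes the pair precisely when $v\in F_H(v_1,v_2)$.

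The two inequalities then follow exactly as in Theorem~\ref{2c}. For the lower bound, given an optimal fixing function $g$ of $K_1\odot H$, its restriction $g_w(v)=g((w,v))$ satisfies $g_w(F_H(v_1,v_2))=g(F_{K_1\odot H}((w,v_1),(w,v_2)))\ge 1$ for every $(v_1,v_2)\in V_a(H)$, so $g_w$ is a fixing function of $H$ and $fix_f(K_1\odot H)=|g|\ge|g_w|\ge fix_f(H)$. For the upper bound, given an optimal fixing function $h$ of $H$, I would extend it by $h'((w,v))=h(v)$ and $h'(w)=0$; the identity above shows $h'$ is a fixing function of $K_1\odot H$ with $|h'|=|h|$, whence $fix_f(K_1\odot H)\le fix_f(H)$. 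Combining the two bounds delivers the claimed equality.

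The hard part is really only the very first step, namely pinning down $\Gamma(K_1\odot H)$. The hypothesis $\Delta(H)<|V(H)|-1$ is exactly what prevents a vertex of $H$ from acquiring full degree $|V(H)|$ in the cone and thereby forming a second universal vertex; without it an automorphism could swap such a vertex with $w$, destroying the clean correspondence $\Gamma(K_1\odot H)\cong\Gamma(H)$ (and hence the identification of $V_a$) on which the entire transcription of the $m\ge 2$ proof depends. Once the apex is known to be fixed, the remaining steps are a routine adaptation of Theorem~\ref{2c} to $m=1$.
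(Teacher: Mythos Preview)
Your proposal is correct and follows essentially the same approach as the paper: split into the rigid and non-rigid cases, and in the non-rigid case transcribe the proof of Theorem~\ref{2c} to $m=1$. The paper's argument is extremely terse (it literally says ``by a similar proof of Theorem~\ref{2c}''), whereas you make explicit the one point the paper leaves implicit, namely that the hypothesis $\Delta(H)<|V(H)|-1$ forces $w$ to be the unique vertex of maximum degree and hence a fixed vertex, which is precisely what allows the $m\ge 2$ argument to go through unchanged.
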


Now, we compute $fix_f(K_1\odot H)$ if $H$ has maximum degree
$|V(H)|-1$.

\begin{Theorem}
Let $H$ be a graph with maximum degree $|V(H)|-1$. Suppose the
number of vertices with degree $|V(H)|-1$ in $H$ is $k$. Then
  $$
  fix_f(K_1\odot H)=\left\{\begin{array}{ll}
                             fix_f(H)+1, & \textup{if } k=1, \\
                             fix_f(H)+\frac{1}{2} & \textup{if } k\ge 2.
                           \end{array}\right.
  $$
\end{Theorem}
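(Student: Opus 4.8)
The plan is to recognize $K_1\odot H$ as the cone $K_1+H$ over $H$ (the single vertex of $K_1$, call it $c$, is joined to every vertex of $H$), and then to pin down its automorphisms through its universal vertices. Writing $n=|V(H)|$, the apex $c$ has degree $n$, while a vertex $v\in V(H)$ has degree $\deg_H(v)+1$, which equals $n$ exactly when $\deg_H(v)=n-1$. Hence the universal (maximum-degree) vertices of $G^\ast:=K_1+H$ are precisely $\{c\}\cup U$, where $U=\{v_1,\dots,v_k\}$ is the set of $k$ vertices of degree $n-1$ in $H$, while every vertex of $W:=V(H)\setminus U$ has degree at most $n-1<n$ in $G^\ast$. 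Each pair of universal vertices is a pair of twins, so by Lemma~\ref{twins} the corresponding fixing neighborhood is just that pair.

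First I would establish the automorphism structure. Any automorphism of $G^\ast$ preserves the maximum-degree set $\{c\}\cup U$ and its complement $W$ setwise, and restricts on $W$ to an automorphism of the induced subgraph $G^\ast[W]=H[W]$; conversely, because the universal vertices are adjacent to everything and the $(\{c\}\cup U)$--$W$ adjacency is complete, any automorphism of $H[W]$ together with any permutation of $\{c\}\cup U$ yields an automorphism of $G^\ast$. This gives the splitting
\[
\Gamma(G^\ast)=\mathrm{Sym}(\{c\}\cup U)\times\mathrm{Aut}(H[W]),\qquad \Gamma(H)=\mathrm{Sym}(U)\times\mathrm{Aut}(H[W]),
\]
the second identity following by the same argument applied to $H$. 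Two consequences are what I need: the nontrivial orbits contained in $W$ are the same for $G^\ast$ and for $H$, and for any inner pair $(a,b)\in V_a(G^\ast)$ with $a,b\in W$ one has $F_{G^\ast}(a,b)=F_H(a,b)\subseteq W$ with no universal vertex lying in it (fixing a universal vertex leaves the whole factor $\mathrm{Aut}(H[W])$ free, so $a$ and $b$ remain in one orbit). Thus $V_a(G^\ast)$ is the disjoint union of universal pairs (both coordinates in $\{c\}\cup U$) and inner pairs (both in one $W$-orbit), and the two families of fixing-neighborhood constraints live on the disjoint vertex sets $\{c\}\cup U$ and $W$.

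Because the constraints decouple across the partition $V(G^\ast)=(\{c\}\cup U)\sqcup W$, the minimization defining $fix_f(G^\ast)$ separates into an inner part and a universal part. Set $\mu=\min\bigl\{\sum_{w\in W}g(w): g(F_H(a,b))\ge1\text{ for every inner pair}\bigr\}$; this is exactly the $W$-contribution to a fixing function, and since the inner pairs and their fixing neighborhoods are identical for $H$, it is the same number $\mu$ in both graphs. For the universal part, minimizing $\sum_{z\in\{c\}\cup U}g(z)$ subject to $g(z)+g(z')\ge1$ for all distinct $z,z'$ is the fractional vertex cover of the edges of the complete graph on $k+1$ vertices; summing the $\binom{k+1}{2}$ inequalities forces $\sum g(z)\ge\frac{k+1}{2}$, attained by the constant weight $\tfrac12$. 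Hence $fix_f(G^\ast)=\mu+\frac{k+1}{2}$. The identical decoupling for $H$ gives $fix_f(H)=\mu+\nu$, where $\nu$ is the universal-part optimum on the $k$ vertices of $U$: for $k\ge2$ the same complete-graph computation gives $\nu=\frac{k}{2}$, while for $k=1$ the single vertex $v_1$ lies in $C(H)$, carries no pair constraint, and contributes $\nu=0$.

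Combining, for $k\ge2$ we get $fix_f(G^\ast)=\mu+\frac{k+1}{2}=\bigl(\mu+\frac{k}{2}\bigr)+\frac12=fix_f(H)+\frac12$, and for $k=1$ we get $fix_f(G^\ast)=\mu+1=fix_f(H)+1$, which is the asserted formula. As sanity checks, $H=K_n$ gives $G^\ast=K_{n+1}$ with the values of Example~\ref{c1}, and the upper bounds come from the explicit fixing function equal to an optimal fixing function of $H$ on $V(H)$ together with $g(c)=\tfrac12$ when $k\ge2$ and $g(c)=1$ when $k=1$. The main obstacle is the middle step: rigorously matching $F_{G^\ast}(a,b)$ with $F_H(a,b)$ for inner pairs and confirming that no universal vertex enters an inner fixing neighborhood, since this is precisely what guarantees that the linear program decouples and that the inner optimum $\mu$ is common to both graphs; everything else is the routine complete-graph cover computation.
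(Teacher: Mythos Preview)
Your argument is correct and matches the paper's intended approach: the paper's proof is a one-line sketch observing that the $k+1$ universal vertices of $K_1\odot H$ are pairwise twins and then deferring to the method of Theorem~\ref{2c}, and your product decomposition $\Gamma(K_1+H)\cong\mathrm{Sym}(\{c\}\cup U)\times\mathrm{Aut}(H[W])$ together with the LP decoupling is precisely what is needed to make that sketch rigorous. Your explicit verification that no universal vertex lies in an inner fixing neighborhood (and hence that the inner optimum $\mu$ is common to $H$ and to $K_1+H$) is the key step the paper leaves implicit.
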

\proof Note that there are $k+1$ vertices with degree $|V(H)|$ in
$K_1\odot H$ and all of them are twins. The rest of the proof is
similar to the proof of Theorem \ref{2c}.\qed
\section{Fractional Fixing Number of Composition Product of Graphs}
Let $G$ and $H$ be two graphs. The \emph{composition product} of $G$
and $H$, denoted by $G[H]$, is the graph with vertex set $V(G)\times
V(H)=\{(u,v): u\in V(G)$ and $v\in V(H)\}$, where $(u,v)$ is
adjacent to $(x,y)$ whenever $ux\in E(G)$ or $u=x$ and $vy\in E(H)$.
For any vertex $u\in V(G)$ and $v\in V(H)$, we define the vertex set
$H(u)=\{(u,y)\in V(G[H]):y\in V(H)\}$ and $G(v)$ = $\{(x,v)\in
V(G[H]):x\in V(G)\}$.

\par Let $G$ be a connected graph and $H$ be an arbitrary
graph containing $k\geq 1$ components $H_{1},H_{2},\cdots,H_{k}$
with $|V(H_{j})|\geq 2$ for each $j=1,2,\ldots,k$. For any vertex
$u\in V(G)$ and $1\leq i\leq k$, we define the vertex set $H_{i}(u)
=\{(u,v)\in V(G[H]):v\in V(H_{i})\}$. Let $|V(H_{i})|=m_{i}$, $1\leq
i\leq k$. From the definition of $G[H]$, it is clear that for every
$(u, v)\in V(G[H])$, $deg_{G[H]}(u, v)= deg_{G}(u)\cdot |V(H)|+
deg_{H}(v)$. If $G$ is a disconnected graph having $k\geq 2$
components $G_1$, $G_2$, \ldots, $G_k$, then $G[H]$ is also a
disconnected graph having $k$ components such that $G[H]=G_1[H]\cup
G_2[H]\cup \ldots \cup G_k[H]$ and each component $G_i[H]$ is the
composition product of connected component $G_i$ of $G$ with $H$,
therefore throughout this section, we will assume $G$ to be
connected. For $x,y\in V(G[H])$, the fixing neighborhood of $x,y$ is
denoted by $F_{G[H]}(x,y)$. For a subgraph $Q$ of a graph $G$,
$\mathcal{F}(Q)=\{x\in V(G):x\in F(u,v) \,\,\mbox{for} (u,v)\in
V_a(Q)\}$.
\begin{Lemma}\cite{iijav}\label{lma s1}
Let $G$ and $H$ be two non-rigid graphs. For two distinct vertices
$u,v \in V(G)$, if $z\in H(v)$, then $z\not\in F_{G[H]}(x,y)$ for
$(x, y) \in V_a(H(u))$.
\end{Lemma}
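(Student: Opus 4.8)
The plan is to unwind the definition of the fixing neighborhood and then exhibit one explicit automorphism. Writing $x=(u,v_1)$ and $y=(u,v_2)$, the condition $z\notin F_{G[H]}(x,y)$ means precisely $O_z(x)=O_z(y)$; since these are orbits of the stabilizer $\Gamma_z$, this is equivalent to the existence of an automorphism $\pi$ of $G[H]$ with $\pi(z)=z$ and $\pi(x)=y$. So the whole statement reduces to producing one such $\pi$. The natural candidate is a ``single-fiber'' automorphism that permutes only the copy $H(u)$ and acts as the identity on every other fiber; such a map automatically fixes $z$, since $z\in H(v)$ and $v\ne u$.

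First I would extract the required permutation of the fiber. Because $(x,y)\in V_a(H(u))$ and the induced subgraph on $H(u)$ is isomorphic to $H$, the vertices $v_1,v_2$ lie in a common orbit of $\Gamma(H)$ of size at least $2$; in particular $v_1\ne v_2$ and there is some $\sigma\in\Gamma(H)$ with $\sigma(v_1)=v_2$. I would then define
$$
\pi(w,t)=\begin{cases} (u,\sigma(t)), & w=u,\\ (w,t), & w\ne u, \end{cases}
$$
so that $\pi$ applies $\sigma$ inside $H(u)$ and fixes all other vertices of $G[H]$.

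The heart of the argument---and the only place that needs care---is checking that $\pi$ preserves adjacency, for which I would split into three cases according to whether the two vertices lie in $H(u)$. If neither lies in $H(u)$, then $\pi$ fixes both, so adjacency is untouched. If both lie in $H(u)$, adjacency is governed by $E(H)$ and is preserved because $\sigma\in\Gamma(H)$. The essential case is the mixed one, with a vertex $(u,t)\in H(u)$ and a vertex $(w,t')$ with $w\ne u$: here the defining rule of the composition product makes $(u,t)\sim(w,t')$ equivalent to $uw\in E(G)$, a condition that does not involve the $H$-coordinate, so replacing $t$ by $\sigma(t)$ cannot change adjacency. This is exactly the feature of the lexicographic product that lets fiber-local permutations extend to automorphisms, and I expect this cross-fiber verification to be the main (though routine) obstacle.

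Having confirmed that $\pi\in\Gamma(G[H])$, I would conclude at once: $\pi(z)=z$ because $z$ lies in the fiber $H(v)$ with $v\ne u$, while $\pi(x)=\pi(u,v_1)=(u,\sigma(v_1))=(u,v_2)=y$. Thus $y\in O_z(x)$, so $O_z(x)=O_z(y)$ and therefore $z\notin F_{G[H]}(x,y)$, as required.
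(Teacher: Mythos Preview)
Your argument is correct: constructing the fiber-local automorphism $\pi$ that applies $\sigma\in\Gamma(H)$ inside $H(u)$ and is the identity on every other fiber is exactly the right move, and your three-case adjacency check is the standard verification that such maps lie in $\Gamma(G[H])$. The conclusion $y\in O_z(x)$, hence $O_z(x)=O_z(y)$ and $z\notin F_{G[H]}(x,y)$, then follows immediately.

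As for comparison with the paper: note that the paper does \emph{not} supply its own proof of this lemma. It is quoted directly from \cite{iijav} (the citation appears in the lemma heading), so there is no in-paper argument to measure yours against. Your proof is the natural one and almost certainly coincides in spirit with the original in \cite{iijav}, since the single-fiber extension of an automorphism of $H$ is the canonical way to exhibit automorphisms of a lexicographic product that fix all vertices outside one fiber.
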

\begin{Lemma}\cite{iijav}\label{lma s2}
Let $G[H]$ be the composition product of two non-rigid graphs $G$
and $H$. Let $H_1, H_2,\ldots, H_k$, $k\geq1$, be the non-trivial
components of $H$. Then for $u\in V(G)$ and $x\in H_j(u)$, $x\not\in
\mathcal{F}(H_i(u))$, $1\leq i, j\leq k$ and $i\neq j$.
\end{Lemma}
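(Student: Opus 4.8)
The goal is to prove that no vertex of $H_j(u)$ lies in $\mathcal{F}(H_i(u))$ when $i\ne j$. Since $\mathcal{F}(H_i(u))=\bigcup_{(p,q)\in V_a(H_i(u))}F_{G[H]}(p,q)$, the statement is vacuous if $V_a(H_i(u))=\emptyset$; otherwise it suffices to fix an arbitrary pair $(p,q)\in V_a(H_i(u))$ and an arbitrary $x\in H_j(u)$ and to show $x\notin F_{G[H]}(p,q)$. Recalling that $x\notin F_{G[H]}(p,q)$ is equivalent to $O_x(p)=O_x(q)$, I would reduce the whole lemma to a single task: produce an automorphism $\gamma\in\Gamma(G[H])$ with $\gamma(x)=x$ and $\gamma(p)=q$, for then $q\in O_x(p)$ and the two orbits coincide.

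I would first fix coordinates. As $p,q$ lie in the copy $H_i(u)$, write $p=(u,v_1)$ and $q=(u,v_2)$ with $v_1\ne v_2$ in $V(H_i)$, and write $x=(u,w)$ with $w\in V(H_j)$. I would then take $\sigma\in\Gamma(H_i)$ with $\sigma(v_1)=v_2$ (the existence of such a $\sigma$ being addressed in the last paragraph), and lift $\sigma$ to a global automorphism that disturbs nothing outside the copy $H_i(u)$, namely
$$
\tilde\sigma(u',v')=\begin{cases}(u,\sigma(v')),& u'=u\textup{ and }v'\in V(H_i),\\ (u',v'),&\textup{otherwise,}\end{cases}
$$
and verify that $\tilde\sigma\in\Gamma(G[H])$.

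This verification rests on two structural features of the composition product. First, within the fiber $H(u)$ there are no edges joining $H_i(u)$ to any other component copy $H_\ell(u)$, because $H_i$ and $H_\ell$ are distinct components of $H$; hence relabelling the $H_i(u)$-coordinate by $\sigma$ can neither create nor destroy a within-fiber edge (and, since $i\ne j$, the vertex $x$ itself is left untouched). Second, by the definition of $G[H]$, every vertex of the fiber $H(u)$ is adjacent to exactly the same set of vertices outside the fiber, namely $\bigcup_{uw'\in E(G)}H(w')$; hence moving $(u,v')$ to $(u,\sigma(v'))$ preserves all adjacencies to the remaining fibers as well. Since $\sigma$ is a bijection of $V(H_i)$ and the identity is used everywhere else, $\tilde\sigma$ is a bijection preserving adjacency and non-adjacency, so $\tilde\sigma\in\Gamma(G[H])$. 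By construction $\tilde\sigma$ fixes every vertex outside $H_i(u)$, in particular $\tilde\sigma(x)=x$, while $\tilde\sigma(p)=(u,\sigma(v_1))=(u,v_2)=q$; taking $\gamma=\tilde\sigma$ settles the reduced task.

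The step I expect to demand the most care is the existence of the component automorphism $\sigma\in\Gamma(H_i)$ with $\sigma(v_1)=v_2$. The subtlety is that the equivalence recorded by $(p,q)\in V_a(H_i(u))$ might a priori be witnessed by an automorphism of the ambient graph $G[H]$ that permutes fibers or interchanges mutually isomorphic components of $H$, whereas only a genuine automorphism of $H_i$ extends by the identity and thus fixes $x$. I would resolve this by tracing any witness $\phi\in\Gamma(G[H])$ of $O(p)=O(q)$ back to the fiber $H(u)$: using the companion fiber-level statement (Lemma~\ref{lma s1}) together with the observation that each fiber splits into the copies $H_1(u),\dots,H_k(u)$ of the components of $H$ and that these copies are determined by their internal adjacency, I would argue that a witness carrying $p,q\in H_i(u)$ to one another must map the copy $H_i(u)$ onto itself, and hence restricts to an automorphism of $H_i$ sending $v_1$ to $v_2$. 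This is precisely the $\sigma$ used above, and it completes the proof.
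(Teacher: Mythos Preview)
The paper does not prove this lemma; it is quoted from \cite{iijav} without argument, so there is no in-paper proof to compare against. Your proposal is therefore being judged on its own merits.

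Your main argument is correct and is exactly the natural one: given $(p,q)=( (u,v_1),(u,v_2))\in V_a(H_i(u))$, lift an automorphism $\sigma\in\Gamma(H_i)$ with $\sigma(v_1)=v_2$ to a map $\tilde\sigma$ of $G[H]$ that acts as $\sigma$ on the copy $H_i(u)$ and as the identity elsewhere, check that $\tilde\sigma\in\Gamma(G[H])$, and conclude that $\tilde\sigma$ fixes $x\in H_j(u)$ while sending $p$ to $q$, so $O_x(p)=O_x(q)$ and $x\notin F_{G[H]}(p,q)$. Your verification that $\tilde\sigma$ is an automorphism (no edges between distinct component copies inside a fiber; uniform outside-fiber neighbourhoods in a composition product) is clean and complete.

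The only weakness is your final paragraph. You worry that $(p,q)\in V_a(H_i(u))$ might be witnessed only by an ambient automorphism of $G[H]$, and you invoke Lemma~\ref{lma s1} to extract a $\sigma\in\Gamma(H_i)$. This detour is unnecessary and somewhat circular: by the paper's definition, for a subgraph $Q$ the set $V_a(Q)$ is computed with respect to $\Gamma(Q)$, not $\Gamma(G[H])$. Since $H_i(u)$, as an induced subgraph, is isomorphic to $H_i$, the hypothesis $(p,q)\in V_a(H_i(u))$ already hands you $\sigma\in\Gamma(H_i)$ with $\sigma(v_1)=v_2$ directly. You may simply delete that paragraph.
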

\begin{Lemma}\label{lexico 1} Let $G$ and $H$ be two non-rigid graphs. Let
$H_{1},H_{2},\cdots,H_{k}$, $k\geq1$, be the non-trivial components
of $H$. Let $\{x,y\}\subseteq V_a(G[H])$.
\\(1)If $\{x,y\}\subseteq V_a(H_i(u))$, for some $u\in V(G)$, say $x=(u,
v_1)$ and $y=(u,v_2)$, then $F_{G[H]}(x,y)= \{u\}\times
F_{H_i}(v_1,v_2)$.
\\(2) If $\{x,y\}\subseteq V_a(G(v))$ for some $v\in V(H)$ say
$x=\{u_1,v\}$ and $y=\{u_2,v\}$ then $F_{G[H]}\{x,y\}\supseteq
F_G(u_1,u_2)\times \{v\}$.
\end{Lemma}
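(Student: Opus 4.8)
The plan is to analyze the two parts separately, translating each geometric/combinatorial configuration in $G[H]$ into the corresponding fixing-neighborhood computation, using the already-established Lemmas~\ref{lma s1} and \ref{lma s2} to control which vertices can possibly fix the given pairs.

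For part~(1), where $\{x,y\}\subseteq V_a(H_i(u))$ with $x=(u,v_1)$ and $y=(u,v_2)$, I would first establish the inclusion $F_{G[H]}(x,y)\subseteq \{u\}\times F_{H_i}(v_1,v_2)$ and then the reverse. To show a vertex $(w,z)$ outside $\{u\}\times V(H_i)$ cannot fix $(x,y)$: if $w\ne u$, then by Lemma~\ref{lma s1} such a vertex lies in $H(w)$ (or $G(\cdot)$ away from $H_i(u)$) and fails to fix any pair in $V_a(H_i(u))$; if $w=u$ but $z\notin V(H_i)$, then $(u,z)\in H_j(u)$ for some $j\ne i$ and Lemma~\ref{lma s2} gives $(u,z)\notin\mathcal F(H_i(u))$, so it does not fix $(x,y)$. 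Thus the fixing neighborhood is confined to $\{u\}\times V(H_i)$. Within this fiber the structure is an exact copy of $H_i$ (an induced subgraph with the same adjacencies, since within a single $H(u)$ the composition product restricts to $H$), so the stabilizer action on $\{u\}\times V(H_i)$ mirrors the action of $\Gamma(H_i)$ on $V(H_i)$. Hence $(u,z)$ fixes $(x,y)$ precisely when $z$ fixes $(v_1,v_2)$ in $H_i$, giving $F_{G[H]}(x,y)=\{u\}\times F_{H_i}(v_1,v_2)$.

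For part~(2), where $\{x,y\}\subseteq V_a(G(v))$ with $x=(u_1,v)$ and $y=(u_2,v)$, I only need the inclusion $F_{G[H]}(x,y)\supseteq F_G(u_1,u_2)\times\{v\}$. The key observation is that any automorphism $\sigma$ of $G$ lifts to an automorphism $\tilde\sigma$ of $G[H]$ acting as $\tilde\sigma(w,z)=(\sigma(w),z)$ on each copy, since this preserves both the inter-copy edges (governed by $E(G)$) and the intra-copy edges (governed by $E(H)$). So for a vertex $w\in F_G(u_1,u_2)$, i.e. $O_w^G(u_1)\ne O_w^G(u_2)$, I would show that fixing $(w,v)$ in $G[H]$ forces $O_{(w,v)}^{G[H]}((u_1,v))\ne O_{(w,v)}^{G[H]}((u_2,v))$: an automorphism of $G[H]$ fixing $(w,v)$ that sends $(u_1,v)$ to $(u_2,v)$ would project to an automorphism of $G$ fixing $w$ and sending $u_1$ to $u_2$ (projection onto the $G$-coordinate is well-defined because copies are permuted as blocks), contradicting $w\in F_G(u_1,u_2)$. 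Hence $(w,v)$ fixes the pair and the claimed inclusion holds.

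The main obstacle I anticipate is rigorously justifying the block/projection structure of automorphisms of $G[H]$ — specifically, that every automorphism of $G[H]$ permutes the fibers $H(u)$ as blocks and therefore induces a well-defined quotient automorphism on $G$. This is where part~(2) could fail to be an equality (only an inclusion is claimed, precisely because automorphisms \emph{internal} to the fibers may also contribute to the fixing neighborhood, and those are not captured by $F_G(u_1,u_2)\times\{v\}$). I would handle the fiber-permutation property carefully, noting that two vertices lie in the same fiber $H(u)$ iff they have the same closed neighborhood outside their fiber structure, a relation preserved by automorphisms; for part~(1) I must additionally invoke Lemmas~\ref{lma s1} and \ref{lma s2} to rule out cross-fiber and cross-component contributions, which is the technically delicate step ensuring the stated equality rather than a mere inclusion.
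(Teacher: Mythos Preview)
Your approach is essentially the same as the paper's: the paper's entire proof reads ``$(1)$ holds from Lemma~\ref{lma s1} and \ref{lma s2} and $(2)$ is obvious,'' and you have simply supplied the details behind both clauses. Your flagging of the fiber-permutation subtlety in part~(2) is well taken---the paper does not address it either---but the underlying strategy is identical.
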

\begin{proof}$(1)$ holds from Lemma \ref{lma s1} and \ref{lma s2}
and (2) is obvious.
\end{proof}
\begin{Lemma}\label{lexico 2}Let $G$ and $H$ be two non-rigid graphs. Let
$H_{1},H_{2},\cdots,H_{k}$, $k\geq1$, be the non-trivial components
of $H$. Let $g$ be a fixing function of $G[H]$ such that
$fix_f(G[H])=|g|$. Then for $u\in V(G)$, $g_i^u:V(H_i(u))\rightarrow
[0,1]$, $(u,x)\mapsto g(u,x)$, is a fixing function of $H_i(u)$.
Moreover, if $h_i$ is a fixing function of $H_i$ such that
$fix_f(H_i)=|h_i|$ then $|g_i^u|\geq |h_i|$.
\end{Lemma}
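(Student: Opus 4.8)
The plan is to establish the two assertions in order: first that $g_i^u$ is a fixing function of $H_i(u)$, and then that $|g_i^u|\ge |h_i|$, the latter following almost immediately from the former together with the definition of $fix_f$ as a minimum. Throughout I fix $u\in V(G)$ and $i\in\{1,\ldots,k\}$.

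First I would record that the map $(u,v)\mapsto v$ is an isomorphism from the induced subgraph $H_i(u)$ of $G[H]$ onto the component $H_i$; consequently $V_a(H_i(u))$ is in natural bijection with $V_a(H_i)$, and the fixing neighbourhood $F_{H_i(u)}$ computed inside the standalone graph $H_i(u)$ corresponds to $F_{H_i}$. To verify that $g_i^u$ is a fixing function I must show $g_i^u(F_{H_i(u)}(x,y))\ge 1$ for every pair $(x,y)\in V_a(H_i(u))$; writing $x=(u,v_1)$ and $y=(u,v_2)$, this amounts to $\sum_{v\in F_{H_i}(v_1,v_2)}g((u,v))\ge 1$.

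The key step, and the one I expect to be the main obstacle, is to certify that the pair $((u,v_1),(u,v_2))$ actually lies in $V_a(G[H])$, so that the defining inequality for the fixing function $g$ of $G[H]$ may be applied to it and so that Lemma \ref{lexico 1}(1) is available. Since $(v_1,v_2)\in V_a(H_i)$, there is an automorphism $\sigma\in\Gamma(H_i)$ with $\sigma(v_1)=v_2$. I would extend $\sigma$ to a permutation $\pi$ of $V(G[H])$ that applies $\sigma$ only inside the single copy $H_i(u)$ and fixes every other vertex, and then check that $\pi\in\Gamma(G[H])$: internal edges of $H_i(u)$ are preserved because $\sigma\in\Gamma(H_i)$, while adjacencies between $H_i(u)$ and any $H(u')$ with $u'\ne u$ depend only on whether $uu'\in E(G)$ and hence are untouched by $\pi$. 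Because $\pi((u,v_1))=(u,v_2)$ with $v_1\ne v_2$, the two vertices share an orbit of size at least $2$, so $((u,v_1),(u,v_2))\in V_a(G[H])$.

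Having secured this, I would invoke Lemma \ref{lexico 1}(1) to get $F_{G[H]}((u,v_1),(u,v_2))=\{u\}\times F_{H_i}(v_1,v_2)$, whence
$$g_i^u(F_{H_i(u)}(x,y))=\sum_{v\in F_{H_i}(v_1,v_2)}g((u,v))=g\big(F_{G[H]}((u,v_1),(u,v_2))\big)\ge 1,$$
the last inequality holding because $g$ is a fixing function of $G[H]$ and the pair lies in $V_a(G[H])$. As $(x,y)$ was arbitrary, $g_i^u$ is a fixing function of $H_i(u)$. Finally, for the ``moreover'' clause I would simply use that $H_i(u)\cong H_i$ and that $fix_f(H_i)$ is by definition the minimum weight of a fixing function, so $|g_i^u|\ge fix_f(H_i(u))=fix_f(H_i)=|h_i|$.
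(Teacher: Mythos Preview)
Your proof is correct and follows essentially the same route as the paper: invoke Lemma~\ref{lexico 1}(1) to identify $F_{G[H]}((u,v_1),(u,v_2))$ with $\{u\}\times F_{H_i}(v_1,v_2)$, then transfer the fixing-function inequality for $g$ to $g_i^u$. You are in fact more careful than the paper on two points: you explicitly verify that $((u,v_1),(u,v_2))\in V_a(G[H])$ before applying Lemma~\ref{lexico 1}(1), which the paper tacitly assumes; and your ``moreover'' clause is the clean one-line deduction from the definition of $fix_f$ as a minimum, whereas the paper re-derives it by a short contradiction argument that effectively repeats the first part.
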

\begin{proof} For $(u,v_1),(u,v_2)\in V_a(H_i(u))$, by Lemma
\ref{lexico 1},

$g_i^u(F_{H_i(u)}(u,v_1),(u,v_2))=\sum\limits_{w\in
F_{H_i}(v_1,v_2)}g(u,w)=g(F_{G[H]}(u,v_1),(u,v_2))\geq1$, \\which
implies that $g_i^u$ is a fixing function of $H_i(u)$.

Suppose $|g_i^u|<|h_i|$. Then there exist two distinct vertices
$x,y$ in $H_i(u)$ such that $g_i^u(F_{H_i(u)}(x,y))<1$. Then
$g_i^u(F_{H_i(u)}(x,y))=g(F_{G[H]}(x,y))<1$, which contradicts the
fact that $g$ is a fixing function of $G[H]$. Hence, $|g_i^u|\geq
|h_i|$.
\end{proof}
\begin{Theorem}Let $G$ and $H$ be two non-rigid graphs of orders $m$ and $n$ respectively. Let
$H_{1},H_{2},\cdots,H_{k}$, $k\geq1$, be the non-trivial components
of $H$. Then
$$m\left(\sum\limits_{i=1}^kfix_f(H_i)\right)\leq fix_f(G[H])\leq \frac{mn}{2}.$$
\end{Theorem}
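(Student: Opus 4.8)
The plan is to establish the two bounds separately. For the lower bound $m\left(\sum_{i=1}^k fix_f(H_i)\right)\leq fix_f(G[H])$, I would start by fixing a fixing function $g$ of $G[H]$ with $|g|=fix_f(G[H])$. By Lemma~\ref{lexico 2}, for each vertex $u\in V(G)$ and each non-trivial component $H_i$, the restricted function $g_i^u$ defined by $(u,x)\mapsto g(u,x)$ is a fixing function of $H_i(u)$, and moreover $|g_i^u|\geq fix_f(H_i)$ (since $H_i(u)\cong H_i$). The key structural fact I would invoke is that the vertex sets $H_i(u)$ are pairwise disjoint as $u$ ranges over $V(G)$ and $i$ ranges over $1,\ldots,k$: distinct $u$ give disjoint fibers $H(u)$, and within a single fiber the components $H_1,\ldots,H_k$ occupy disjoint vertex sets. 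Therefore I can write
$$
|g|\;\geq\;\sum_{u\in V(G)}\sum_{i=1}^k |g_i^u|\;\geq\;\sum_{u\in V(G)}\sum_{i=1}^k fix_f(H_i)\;=\;m\sum_{i=1}^k fix_f(H_i),
$$
where the first inequality holds because the weights of $g$ on the disjoint union of these fibers cannot exceed the total weight $|g|$. This yields the lower bound.

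For the upper bound $fix_f(G[H])\leq \frac{mn}{2}$, the cleanest route is to exhibit an explicit fixing function. I would try the constant function $g(x)=\tfrac12$ for every vertex $x\in V(G[H])$, which has total weight $\tfrac12\cdot mn=\tfrac{mn}{2}$. To verify this is a fixing function, I must check that $g(F_{G[H]}(x,y))\geq 1$ for every pair $(x,y)\in V_a(G[H])$, i.e.\ that every such fixing neighborhood contains at least two vertices. This follows from Lemma~\ref{twins}, which gives $\{x,y\}\subseteq F(x,y)$ for any distinct $x,y$, so $|F_{G[H]}(x,y)|\geq 2$ and hence $g(F_{G[H]}(x,y))\geq 2\cdot\tfrac12=1$. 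Thus $g$ is a fixing function and $fix_f(G[H])\leq |g|=\tfrac{mn}{2}$.

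The main obstacle, and the step requiring the most care, is the disjointness argument in the lower bound: one must be certain that summing $|g_i^u|$ over all $u$ and all $i$ does not over-count any vertex of $G[H]$, and that no vertex weight outside these non-trivial fibers is being ignored in a way that could invalidate the inequality direction. Since the fibers $\{H_i(u)\}$ are genuinely disjoint and $g$ is non-negative, the total $|g|$ dominates the sum of the partial sums, so the inequality is safe—but this hinges on correctly identifying the vertex partition supplied by the composition structure. I expect the upper bound to be routine once the constant-weight function is chosen, with all the substance concentrated in verifying that Lemmas~\ref{lexico 2} and~\ref{twins} apply verbatim to the pairs in $V_a(G[H])$.
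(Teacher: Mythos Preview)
Your proposal is correct and follows essentially the same approach as the paper: the lower bound uses Lemma~\ref{lexico 2} and the disjointness of the fibers $H_i(u)$ exactly as the paper does, and your upper bound argument via the constant function $\tfrac12$ and Lemma~\ref{twins} is precisely the content of Corollary~\ref{cro1n}, which the paper invokes directly.
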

\begin{proof} Upper bound follows from Corollary \ref{cro1n}. Let
$g$ be a fixing function of $G[H]$ with $|g|=fix_f(G[H])$. Then for
$u\in V(G)$, $g_i^u:V(H_i(u))\rightarrow [0,1]$, $(u,x)\mapsto
g(u,x)$, is a fixing function of $H_i(u)$ and $|g_i^u|\geq
fix_f(H_i)$, by Lemma \ref{lexico 2}. Then $|g|\geq
\sum\limits_{u\in V(G)}\sum\limits_{i=1}^k|g_i^u|$. Hence,
$fix_f(G[H])\geq m\left(\sum\limits_{i=1}^kfix_f(H_i)\right)$.
\end{proof}
\begin{Theorem}Let $G$ be a non-rigid and $H$ a rigid graph. Then
$fix_f(G[H])=fix_f(G)$.
\end{Theorem}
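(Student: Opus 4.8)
The plan is to prove the two inequalities $fix_f(G)\le fix_f(G[H])$ and $fix_f(G[H])\le fix_f(G)$, following the template of the rigid-$H$ corona theorem. The engine of both halves is a structural description of $\Gamma(G[H])$ forced by the rigidity of $H$. First I would record that a rigid graph has no twins: if $w_1,w_2$ were twins in $H$, then $\iota_{w_1,w_2}$ would be a nontrivial automorphism of $H$ by the discussion preceding Lemma~\ref{twins}, contradicting rigidity. I would then argue that every $\pi\in\Gamma(G[H])$ is \emph{induced from} $G$, i.e.\ $\pi(u,v)=(\sigma(u),v)$ for some $\sigma\in\Gamma(G)$: the fibres $H(u)$ are to be recognized from the graph as blocks of vertices sharing a common external neighbourhood, so $\pi$ permutes the fibres; the induced permutation of the fibres is an automorphism of $G$; and since the subgraph induced on each fibre is a rigid copy of $H$, $\pi$ cannot move the second coordinate. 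Granting this, the orbit of $(u,v)$ is $O_G(u)\times\{v\}$, whence
\[
V_a(G[H])=\{((u_1,v),(u_2,v)):(u_1,u_2)\in V_a(G),\ v\in V(H)\},
\]
and, computing the stabilizer $\Gamma_{(w,z)}$ in the same way, $F_{G[H]}((u_1,v),(u_2,v))=F_G(u_1,u_2)\times V(H)$ for every such pair.

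For $fix_f(G)\le fix_f(G[H])$ I would take a fixing function $g$ of $G[H]$ with $|g|=fix_f(G[H])$ and push it down to $g'\colon V(G)\to[0,1]$, $g'(u)=\sum_{v\in V(H)}g((u,v))$. For a pair $(u_1,u_2)\in V_a(G)$, any choice of $v$ gives $((u_1,v),(u_2,v))\in V_a(G[H])$, so using the inclusion $F_{G[H]}((u_1,v),(u_2,v))\subseteq F_G(u_1,u_2)\times V(H)$ (which needs only that the induced automorphisms exist, hence is robust) I obtain
\[
1\le g\bigl(F_{G[H]}((u_1,v),(u_2,v))\bigr)\le\sum_{w\in F_G(u_1,u_2)}\ \sum_{z\in V(H)}g((w,z))=g'(F_G(u_1,u_2)).
\]
Thus $g'$ is a fixing function of $G$ with $|g'|=|g|$, and the inequality follows.

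For the reverse inequality $fix_f(G[H])\le fix_f(G)$ I would lift an optimal fixing function $h$ of $G$ by spreading its weight uniformly over each fibre, $h'((u,v))=h(u)/|V(H)|$, so that $|h'|=|h|=fix_f(G)$. Using the full identity $F_{G[H]}((u_1,v),(u_2,v))=F_G(u_1,u_2)\times V(H)$ from the first paragraph, for every pair in $V_a(G[H])$ I would compute $h'(F_{G[H]}((u_1,v),(u_2,v)))=\sum_{z\in V(H)}\sum_{w\in F_G(u_1,u_2)}h(w)/|V(H)|=h(F_G(u_1,u_2))\ge1$, so $h'$ is a fixing function of $G[H]$ and $fix_f(G[H])\le|h'|=fix_f(G)$.

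The main obstacle is the structural claim of the first paragraph, that every automorphism of $G[H]$ preserves the fibres and acts trivially on the $H$-coordinate. The ``no twins'' observation rules out within-fibre motion, but the fibre-to-fibre behaviour hinges on whether the external-neighbourhood blocks are canonical, and this is precisely where the rigidity of $H$ must be invoked with care: a fibre can fail to be an automorphism-invariant block when $H$, or its complement, splits into pieces that could be exchanged between fibres sitting over twin vertices of $G$. I therefore expect the crux to be a clean argument that, for rigid $H$, no such exchange occurs, since it is exactly this that delivers the fixing-neighbourhood identity $F_{G[H]}((u_1,v),(u_2,v))=F_G(u_1,u_2)\times V(H)$ driving the reverse inequality.
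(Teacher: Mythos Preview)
Your overall strategy---prove the two inequalities by pushing an optimal fixing function of $G[H]$ down to $G$ and lifting one from $G$ back up---is exactly what the paper does. The only cosmetic difference is the lift: the paper places all of $h$ on a single slice $G\times\{v_0\}$ via $h'((u,v_0))=h(u)$ and $h'=0$ elsewhere, whereas you spread it uniformly as $h(u)/|V(H)|$; both work once the fixing-neighbourhood identity is granted.

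Where you diverge from the paper is in rigour. The paper simply \emph{asserts} the two structural facts you flag---that $F_{G[H]}((u_1,x),(u_2,x))=F_G(u_1,u_2)\times V(H)$ and that every pair in $V_a(G[H])$ has the form $((b,z),(c,z))$ with $(b,c)\in V_a(G)$---with no argument for either. The ``clean argument that, for rigid $H$, no such exchange occurs'' which you anticipate is absent from the paper.

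Your caution is in fact warranted: the exchange you describe \emph{does} occur, and the theorem as stated is false. Take $G=P_3$ with leaves $u_1,u_3$ (non-adjacent twins in $G$, so $fix_f(G)=1$) and $H=K_1\sqcup R$ with $R$ any connected rigid graph and $w_0$ the isolated vertex of $H$; then $H$ is rigid. In $G[H]$ the vertices $(u_1,w_0)$ and $(u_3,w_0)$ are twins (common neighbourhood $H(u_2)$), so any fixing function $g$ satisfies $g((u_1,w_0))+g((u_3,w_0))\ge 1$. But the map swapping $(u_1,w)\leftrightarrow(u_3,w)$ for all $w\in V(R)$ and fixing every other vertex is also an automorphism of $G[H]$; hence $(u_1,w_0),(u_3,w_0)\notin F_{G[H]}((u_1,w_1),(u_3,w_1))$ for $w_1\in V(R)$, and this second constraint is supported on variables disjoint from the first. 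Therefore $fix_f(G[H])\ge 2>1=fix_f(G)$. Your push-down inequality $fix_f(G)\le fix_f(G[H])$ survives---it needs only the inclusion $F_{G[H]}((u_1,v),(u_2,v))\subseteq F_G(u_1,u_2)\times V(H)$, which does follow from the induced $\Gamma(G)$-action---but the reverse inequality, and with it the theorem, requires an extra hypothesis (for instance that both $H$ and $\overline H$ be connected, or that $G$ be twin-free) that neither you nor the paper supplies.
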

\begin{proof} For $(u_1,u_2)\in V_a(G)$, $F_{G[H]}((u_1,x),(u_2,x))=F_G(u_1,u_2)\times
V(H)$ for any $x\in V(H)$. Let $g$ be a fixing function of $G[H]$
with $|g|=fix_f(G[H])$. Define $g':V(G)\rightarrow[0,1]$, $u\mapsto
\sum\limits_{v\in V(H)}g(u,v)$. Then

$g'(F_G(u_1,u_2))=\sum\limits_{v\in
V(H)}g(F_{G[H]}((u_1,v),(u_2,v)))\geq 1$, which implies that $g'$ is
a fixing function of $G$. Hence, $fix_f(G[H])\geq fix_f(G)$.

Let $h$ be a fixing function of $G$ with $|h|=fix_f(G)$. Define $$
h': V(G[H])\rightarrow[0,1],\qquad w\mapsto\left\{
\begin{array}{ll}
h(u),& \textup{if } w=(u,v)\,\, \mbox{for some fixed}\,\, v\in V(H),\\
0,& \textup{otherwise }.
\end{array}
\right.
$$
For $(x,y)\in V_a(G[H])$, $x=(b,z)$ and $y=(c,z)$ for some $(b,c)\in
V_a(G)$,

$h'(F_{G[H]}(x,y))=h'(F_{G[H]}((b,z),(c,z))=h(F_G(b,c))\geq1$, which
implies that $h'$ is a fixing function of $G[H]$. Hence,
$fix_f(G)\geq fix_f(G[H])$. Hence, the desired result follows.
\end{proof}
\section{Summary and Conclusion}
In this paper, the concept of the fractional fixing number of graphs
has been studied. We have also introduced integer programming
formulation of the fractional fixing number of graphs. Graphs with
fractional fixing number $fix_f(G)=\frac{|V(G)|}{2}$ have also been
characterized. The fractional fixing number of some families of
graphs, corona product and composition product of graphs have also
obtained. However, it remains to determine the fractional fixing
number of several other families of graphs and graph products.
Metric dimension and fixing number are closely related parameters.
There are several graphs for which the study of the fractional
fixing number of graphs is similar to that of the fractional metric
dimension of graphs.

\end{document}